\definecolor{cyan}{cmyk}{1,0,0,0}
\definecolor{magenta}{cmyk}{0,1,0,0}
\definecolor{yellow}{cmyk}{0,0,1,0}
\definecolor{black}{cmyk}{0,0,0,1}
\definecolor{white}{cmyk}{0,0,0,0}
\definecolor{gray}{cmyk}{0,0,0,0.5}
\definecolor{red}{cmyk}{0,1,1,0}
\definecolor{green}{cmyk}{1,0,1,0}
\definecolor{blue}{cmyk}{1,1,0,0}
\definecolor{palered}{cmyk}{0,0.25,0.25,0}
\definecolor{palegreen}{cmyk}{0.25,0,0.25,0}
\definecolor{paleblue}{cmyk}{0.25,0.25,0,0}
\definecolor{palecyan}{cmyk}{0.25,0,0,0}
\definecolor{palemagenta}{cmyk}{0,0.25,0,0}
\definecolor{paleyellow}{cmyk}{0,0,0.25,0}
\definecolor{palegray}{cmyk}{0,0,0,0.05}
\definecolor{lightred}{cmyk}{0,0.5,0.5,0}
\definecolor{lightgreen}{cmyk}{0.5,0,0.5,0}
\definecolor{lightblue}{cmyk}{0.5,0.5,0,0}
\definecolor{lightcyan}{cmyk}{0.5,0,0,0}
\definecolor{lightmagenta}{cmyk}{0,0.5,0,0}
\definecolor{lightyellow}{cmyk}{0,0,0.5,0}
\definecolor{lightgray}{cmyk}{0,0,0,0.1}
\definecolor{mediumred}{cmyk}{0,0.75,0.75,0}
\definecolor{mediumgreen}{cmyk}{0.75,0,0.75,0}
\definecolor{mediumblue}{cmyk}{0.75,0.75,0,0}
\definecolor{mediumcyan}{cmyk}{0.75,0,0,0}
\definecolor{mediummagenta}{cmyk}{0,0.75,0,0}
\definecolor{mediumyellow}{cmyk}{0,0,0.75,0}
\definecolor{mediumgray}{cmyk}{0,0,0,0.25}
\definecolor{heavyred}{cmyk}{0,1,1,0.25}
\definecolor{heavygreen}{cmyk}{1,0,1,0.25}
\definecolor{heavyblue}{cmyk}{1,1,0,0.25}
\definecolor{heavycyan}{cmyk}{1,0,0,0.25}
\definecolor{heavymagenta}{cmyk}{0,1,0,0.25}
\definecolor{lightolive}{cmyk}{0,0,1,0.25}
\definecolor{heavygray}{cmyk}{0,0,0,0.75}
\definecolor{deepred}{cmyk}{0,1,1,0.5}
\definecolor{deepgreen}{cmyk}{1,0,1,0.5}
\definecolor{deepblue}{cmyk}{1,1,0,0.5}
\definecolor{deepcyan}{cmyk}{1,0,0,0.5}
\definecolor{deepmagenta}{cmyk}{0,1,0,0.5}
\definecolor{olive}{cmyk}{0,0,1,0.5}
\definecolor{deepgray}{cmyk}{0,0,0,0.9}
\definecolor{darkred}{cmyk}{0,1,1,0.75}
\definecolor{darkgreen}{cmyk}{1,0,1,0.75}
\definecolor{darkblue}{cmyk}{1,1,0,0.75}
\definecolor{darkcyan}{cmyk}{1,0,0,0.75}
\definecolor{darkmagenta}{cmyk}{0,1,0,0.75}
\definecolor{darkolive}{cmyk}{0,0,1,0.75}
\definecolor{darkgray}{cmyk}{0,0,0,0.95}
\definecolor{orange}{cmyk}{0,0.5,1,0}
\definecolor{fuchsia}{cmyk}{0,1,0.5,0}
\definecolor{chartreuse}{cmyk}{0.5,0,1,0}
\definecolor{springgreen}{cmyk}{1,0,0.5,0}
\definecolor{purple}{cmyk}{0.5,1,0,0}
\definecolor{royalblue}{cmyk}{1,0.5,0,0}
\definecolor{salmon}{cmyk}{0,0.5,0.5,0}
\definecolor{brown}{cmyk}{0,1,1,0.5}
\definecolor{darkbrown}{cmyk}{0,1,1,0.75}
\definecolor{pink}{cmyk}{0,0.25,0,0}
\definecolor{palegrey}{cmyk}{0,0,0,0.05}
\definecolor{lightgrey}{cmyk}{0,0,0,0.1}
\definecolor{mediumgrey}{cmyk}{0,0,0,0.25}
\definecolor{grey}{cmyk}{0,0,0,0.5}
\definecolor{heavygrey}{cmyk}{0,0,0,0.5}
\definecolor{deepgrey}{cmyk}{0,0,0,0.9}
\definecolor{darkgrey}{cmyk}{0,0,0,0.95}
\newcommand{\RR}{{\mathbb R}}
\newcommand{\NN}{{\mathbb N}}
\newcommand{\VV}{{\mathbb V}}
\newcommand{\TT}{{\mathbb S}}
\newcommand{\X}{{\mathbf X}}
\def\U{{\mathbf U}}
\newcommand{\W}{{\mathbf W}}
\newcommand{\UU}{{\mathbb U}}
\newcommand{\WW}{{\mathbb W}}
\renewcommand{\u}{{\mathbf {u}}}
\newcommand{\x}{\mathbf x}
\newcommand{\XX}{{\mathbb X}}
\newcommand{\state}{x}
\newcommand{\control}{u}
\newcommand{\w}{\mathbf w}
\newcommand{\scenario}{\omega}
\newcommand{\dynamics}{F}
\newcommand{\ic}{\xi}
\newcommand{\constraints}{g}
\newcommand{\fcost}{\theta}
\newcommand{\threshold}{c}
\newcommand{\feedback}{\vartheta}
\newcommand{\bfthreshold}{{\bf\threshold}}
\newcommand{\defegal}{:=}
\newcommand{\pareto}{\mathcal P}
\newcommand{\wpareto}{{\mathcal P}_w}
\def\msy{\rm{MSY}}
\def\bfone{\mathbf 1}
\newcommand{\fv}[2]{\vartheta_{#1}^{#2}} %funcion valor (problema original)
\newcommand{\titf}[2]{[\![{#1}\!:\!{#2}]\!]}%{\{#1,\ldots, #2\}} %intervalo de tiempo
\newcommand{\sd}[2]{D_{#1}^{#2}} % sistema dinámico asociado a n y \ic
\newcommand{\mconstraints}[2]{\text{I}_{#1}^{#2}} %sistema de las restricciones mixtas
\newcommand{\econstraints}[2]{\text{E}_{#1}^{#2}} %sistema de las restricciones mixtas
\newcommand{\gcost}[1]{\mathcal{J}_{#1}} %costo generalizado
\newcommand{\aux}[2]{\mathcal{W}_{#1}\left({#2}\right)} % función auxiliary w
\newcommand{\R}[2]{R_{#1}^{#2}} % función generalizada de restricciones
\newcommand{\pg}[1]{{#1}}
\newcommand{\new}[1]{{#1}}
\newtheorem{theorem}{Theorem}[section]
\newtheorem{proposition}[theorem]{Proposition}
\theoremstyle{definition}
\newtheorem{definition}[theorem]{Definition}
\theoremstyle{remark}
\newtheorem{remark}[theorem]{Remark}
\begin{document}

\title[On the set of robust sustainable thresholds ]{On the set of robust sustainable thresholds for uncertain control systems}
%\subtitle{Some characterizations via optimal control theory}

%\titlerunning{the set of robust sustainable thresholds via a level-set approach}        

\author{Pedro Gajardo}
\address[Pedro Gajardo]{Departamento de Matem\'atica, Universidad T\'ecnica Federico Santa Mar\'ia, Av. Espa\~na 1680, Valpara\'iso, Chile}
\email{pedro.gajardo@usm.cl}

\author{Cristopher Hermosilla}
\address[Cristopher Hermosilla]{Departamento de Matem\'atica, Universidad T\'ecnica Federico Santa Mar\'ia, Av. Espa\~na 1680, Valpara\'iso, Chile}
\email{cristopher.hermosill@usm.cl}

\author{Athena Picarelli}
\address[Athena Picarelli]{Dipartimento di Scienze Economiche, Universit\`a di Verona, Polo Santa Marta, Verona, Italy}
\email{athena.picarelli@univr.it}
\thanks{This work was supported by FONDECYT grants N {1200355 (P. Gajardo, C. Hermosilla)} and N 11190456 (C. Hermosilla), both ANID-Chile programs.\\
{\bf Authors contribution:} the authors collaborated to the design of the research, to the analysis of the
results and to the writing of the manuscript.}

%\date{\today}
% The correct dates will be entered by the editor

\keywords{{Set of robust sustainable thresholds; Discrete-time systems;  Mixed constraints; Level-set approach; Dynamic programming;  Viability theory; Robust viability}.}

\subjclass[2010]{49L20 \and 93C55 \and 	93C10 \and {90C17}}

\maketitle

\begin{abstract}
In natural resource management, or more generally in the study of sustainability issues, often the objective is to maintain the state of a given system within a desirable configuration, typically established in terms of standards or thresholds. For instance, in fisheries management, the procedure of designing policies may  include keeping the spawning stock biomass over a critical threshold and  also ensuring minimal catches. Given a controlled dynamical system in discrete-time, representing the evolution of some natural resources under the action of controls and uncertainties, and an initial endowment of the resources, the aim of this paper is to characterize the set of robust sustainable thresholds, that is,  the thresholds for which there exists some control path, along with its corresponding  state trajectory, satisfying for all possible uncertainty scenarios, prescribed mixed constraints parametrized by such thresholds. This set provides useful information to users and decision-makers, illustrating the trade-offs between  constraints and it is strongly related to the robust viability, one of the key concepts in viability theory, discipline that study the consistency between a controlled dynamical system and given constraints. Specifically, we are concerned with characterizing the weak and strong Pareto fronts of the set of robust sustainable thresholds, providing a practical method for computing such objects based on optimal control theory and a level-set approach. A numerical example, relying on renewable resource management, is shown to demonstrate the proposed method.

\end{abstract}

%\tableofcontents
\section{Introduction}
In natural resource management or broadly  in the study of sustainability issues,  to determine biological, ecological or social constraints to fulfill throughout time emerges as a crucial issue. Mathematically speaking, one of the objectives of decision-makers can be seen as to maintain the state of a given system within a desirable configuration, typically established in terms of constraints  parametrized by standards or thresholds.  For instance, in fisheries management, the procedure of designing policies may  include keeping the spawning stock biomass over a critical threshold and  also ensuring minimal catches. In this example, the first requirement is associated to the sustainability of the resource and the second to economical, social or food security issues.  The focus on constraints   is well adapted  also to address biodiversity conservation problems or  the climate change issue. In this framework, reference points not to exceed for  biological, ecological, economic, or social indicators stand for sustainable management objectives. As examples of this approach, one can mention the concept of Safe Minimum Standards (SMS)  \cite{MargolisNaevdal:2008} where tipping thresholds and risky areas are introduced, or the Tolerable Windows Approach (TWA) \cite{Bruckneretal:1999}, based on safe boundaries and feasibility regions. If the constraints induced by  thresholds or tipping points have to be satisfied over time, such  problems related to sustainability can be formulated into the mathematical framework of viability theory  \cite{Aubin:1990,DLD,Aubin:2011}. Indeed, this approach has been  applied by numerous authors to the sustainable management of renewable resources \cite{BeneDoyen:2000,Beneetal:2001,Krawczyketal:2013,Pereauetal:2012,Pereauetal:2018,Doyenetal:2017,sp1,sp2}, as recently is reviewed in \cite{SchuhbauerSumaila:2016,zaccour}.

Given a controlled dynamical system in discrete-time, representing for instance  the evolution of some natural resources under the action of controls and uncertainties, and an initial endowment of the resources, or more generally an initial state, the aim of this paper is to characterize the \emph{set of robust sustainable thresholds}, composed by the collection of all possible thresholds  for which there exists a control strategy, along with its corresponding state trajectory, satisfying for all possible uncertainty scenarios, prescribed mixed constraints parametrized by such thresholds. In the deterministic framework, this set, called the \emph{set of sustainable threshold or standards}, has been studied recently in \cite{BGV2018,GOR2018,martinet2011,MGDR} and characterized in \cite{DG2018,GajHer19}. In  \cite{GajHer19}, the starting point of the current work,  a characterization of the strong and weak Pareto front for this set is provided.

\if{
The aim of this paper is to characterize the set of robust sustainable thresholds for an uncertain discrete-time control system. Given a controlled dynamical system, the set of sustainable thresholds is the collection of all possible thresholds for which a given initial condition is viable (or sustainable) throughout time \cite{BGV2018,GOR2018,martinet2011,MGDR}. The latter means that some control, along with its corresponding controlled trajectory, satisfies prescribed mixed constraints parametrized by thresholds. For deterministic systems, a characterization of the set of sustainable thresholds, together with its strong and weak Pareto front, has been recently proposed in \cite{GajHer19}.
}\fi

We study here uncertain control systems motivated by practical applications, where the limited knowledge about the phenomena that influence the system evolution, and the role of uncertainty and its quantification becomes particularly relevant. 
%In many practical applications, due to the limited knowledge about the phenomena that influence the system evolution, the role of uncertainty and its quantification becomes particularly relevant. 
In environmental management problems uncertainty typically affects the model as a result of environmental changes that influence natural mechanisms (see, for instance, \cite{LanEngSae03,OlsSan00}). Moreover, uncertainty can also be used to reflect the possibility of measurement errors. In presence of an uncertainty, constraints can be considered in different ways. Typical examples are constraints imposed in probability, expectation and sure (or almost sure) path-wise constraints. In this paper we deal with the last ones: given a set of scenarios reflecting the possible future states of the world, the set of robust sustainable thresholds defines the collection of thresholds that are sustainable under any scenario.  Accordingly, the set of robust sustainable thresholds provides a good picture of the current state of a system in terms of its maintainable sustainability under any possible occurrence, \cite{FreeKoko96}. In particular, for a given initial state, a small set of robust sustainable thresholds means that there exist some circumstances for which there is a limited possibility to operate in a sustainable way.

We point out that the study of robustness in control theory arises in several frameworks. Among the main motivations for this work, we mention sustainable management problems, bio-economic modeling and robust viability, see for instance \cite{DoyDeLaFerrPell07, doyen2003sustainability, TBDG04, DoyePere09,sepulveda2018}. 

In this paper, we first obtain   characterizations of the strong and weak Pareto fronts of the set of robust sustainable thresholds,  and then we use such a characterization to provide a practical method for their approximation. To achieve these goals, we make use of optimal control tools.  In particular, in our main theoretical results we prove that it is possible to describe the strong Pareto front of robust sustainable thresholds by solving a finite number of optimal control problems and that the weak Pareto front corresponds to the zero level set of the value function associated to a suitable unconstrained optimal control problem.
We then use this characterization and the dynamic programming principle to provide an implementable scheme for approximating the set of robust sustainable thresholds and its weak Pareto front. 

We remark that for the characterization of the weak Pareto front we are inspired by the so-called \emph{level-set approach}. Introduced in \cite{OS88} to describe the propagation of fronts in continuous time, the idea at the basis of this approach is to link the set of interest (the set of robust sustainable thresholds in our case) to the  level set of a suitable auxiliary function which can be numerically approximated. In the deterministic continuous time framework this technique has been successfully applied in \cite{BokaForcZida10,MR2151560} to characterize the set of admissible initial condition in presence of (pure) state constraints. The approach has then been extended to the stochastic case  in \cite{BPZ15}, where almost sure path-wise state constraints are taken into account.
{The use of this technique leads us to work with maximin problems as those considered in \cite{MR3090148} and \cite{EsteAschStrei20} to describe the boundaries of admissible sets for continuous time systems  respectively in the deterministic and uncontrolled robust framework. However, we stress that in all the aforementioned works the focus is on the characterization and approximation of the set of sustainable initial conditions for a given threshold, the so-called \emph{viability kernel}, while we are interested to determine the set of thresholds that are sustainable once the initial condition is fixed.}

This manuscript is organized as follows. In Section \ref{sec:preliminaries} we present some preliminary concepts on discrete-time systems introducing the set of robust sustainable thresholds. In this section we establish the standing assumptions for the rest of the paper. The links between apropiate optimal control problems and the set of robust sustainable threshold are established in Section \ref{sec:SRST_OC}, where a characterization of its strong Pareto front is provided.  In Section \ref{weakpf} we characterize the weak Pareto front of the set of robust sustainable thresholds providing a method for computing this front, based on the dynamic programming principle. Finally, in Section \ref{sec:simulations}, we illustrate the method introduced in Section \ref{weakpf} with an example based on renewable resource management.

\section{Preliminaries on discrete-time control systems}\label{sec:preliminaries}

Given a finite time horizon $N \in \NN\setminus\{0\}$, an initial state $\ic \in \X$, a finite sequence of controls $\u=(\control_k)_{k=0}^{N}$  and a scenario $\w=(\scenario_k)_{k=0}^{N}$, we consider the uncertain discrete-time control system:

	\begin{equation}
	\label{eq:system}\tag{$\sd{\ic}{\u}(\w)$}
	\state_{k+1}=\dynamics_k(\state_k,\control_k,\scenario_k),\quad 
	k\in\titf{0}{N},\quad 
	\state_0=\ic.	
  \end{equation}
  
The data of the problem comprise the \pg{dynamics} $\dynamics_0,\ldots,\dynamics_N:\X\times\U\times\W\to\X$, the state space $\X$  (a vector space), the control space $\U$ and  the scenarios' space $\W$. Here, $\titf{p}{q}$ stands for the collection of all integers between $p$ and $q$ (inclusive).

The collection of all possible controls is given by:
	$$\UU\defegal\left\{\u=(\control_k)_{k=0}^{N}\middle|\ \control_0,\ldots,\control_{N-1} \in \U \right\}\cong \U^{N+1}.$$
The possible scenarios are assumed to vary with respect to time in the sense that for any $k\in\titf{0}{N}$ there is $\Omega_k\subseteq\W$ for which $\scenario_k\in\Omega_k$. Consequently, the collection of all possible scenarios is then given by
	$$\WW\defegal\left\{\w=(\scenario_k)_{k=0}^{N}\middle|\ \scenario_k\in\Omega_k,\quad\forall k\in\titf{0}{N} \right\}\cong\prod_{k=0}^{N}\Omega_k.$$

A solution of the uncertain control system \eqref{eq:system} associated with a control $\u\in\UU$ and a scenario $\w\in\WW$ is an element of the space
$$\XX\defegal\left\{\x=(\state_k)_{k=0}^{N+1}\middle|\ \state_0,\ldots,\state_{N+1}\in\X\right\}\cong\X^{N+2},$$
that satisfies the initial time condition $\state_0=\ic$.

Consequently, a solution of \eqref{eq:system}, which is uniquely determined by the control $\u$, a scenario $\w$ and initial state $\ic$, is denoted in the sequel by $\x_{\ic}^\w(\u)$ to emphasize its dependence on the initial data of the problem (control, scenario and state). In Section \ref{sec:SRST_OC} we establish links between appropriate optimal control problems and the set of robust sustainable threshold, characterizing its strong Pareto front.

\subsection{Constraints and sustainable thresholds}\label{sec:mc}
In many real applications, the outputs and inputs of dynamical systems such as \eqref{eq:system} are restricted to a prescribed set, that may reflect {biological, physical, economical or social restrictions}. The uncertain control system considered in this work allows to take different probabilistic interpretations  on how these constraints are satisfied. In this work we are mainly concerned with a robust approach, which means that the set of restrictions considered must be satisfied by the control $\u\in\UU$ together with its corresponding trajectories $\x^\w_{\ic}(\u)=(\state_k)_{k=0}^{N+1}$, for  any possible scenario $\w\in\WW$. To be more precise, we consider the so-called \emph{mixed-constraints} that can be represented as the level-set of a given constraint mappings $\constraints^0,\ldots,\constraints^{N}\colon\X\times\U\to\RR^m$

%\note{We may also consider that the constraints depends on the scenarios:$$\constraints^k(\state_k,\control_k,\scenario_k)\geq c ,  \quad\forall ~k\in\titf{0}{N}$$ and/or there is an end-point constraint that doesn't depend on the control: $$\constraints(N+1,\state_{N+1},\scenario_{N+1})\geq c.$$}

	\begin{equation}\label{eq:constraints}\tag{$\mconstraints{}{\threshold}$}
\constraints^k(\state_k,\control_k)\geq c ,  \quad\forall ~k\in\titf{0}{N}.
\end{equation}

In this work, we also consider that trajectories are forced to satisfy an end-point constraint, which can be represented as the level-set of a given constraint mapping $\fcost\colon\X\to\RR^m$
	\begin{equation}\label{eq:end_point}\tag{$\econstraints{}{\threshold}$}
		\fcost(\state_{N+1})\geq c
	\end{equation}

All the constraints are determined by a parameter $c\in \RR^m$, which is a given vector of thresholds. The focus of our work is on this parameter rather than on the initial conditions as in viability theory \cite{aubin}.

In particular, we are interested in finding, for a given initial state, all possible thresholds $\threshold \in \RR^m$ for which that initial condition is robustly sustainable throughout time. The latter means that some control, along with its corresponding controlled trajectories, satisfy the mixed constraints \eqref{eq:constraints} and the end-point constraints \eqref{eq:end_point} for any possible scenario. The collection of all such thresholds is called the \emph{set of robust sustainable thresholds} and is defined for a given initial condition $\ic \in \X$ as follows:

\begin{equation}\label{eq:thresholds_set}
	\TT(\ic)\defegal\left\{\threshold \in \RR^m \mid \exists \u \in \UU,  \, \u \text{ and } \x_{\ic}^\w(\u)\text{ satisfy   \eqref{eq:constraints} - \eqref{eq:end_point}, for any }\w\in\WW\right\}.
\end{equation}

For a given threshold vector $\threshold \in \RR^m$, the robust viability kernel (\cite{DLD}) associated with the uncertain control system \eqref{eq:system} is given by
$$	\VV(\threshold)\defegal\left\{\ic\in\X \mid \exists \u \in \UU,  \, \u \text{ and } \x_{\ic}^\w(\u)\text{ satisfy  \eqref{eq:constraints} - \eqref{eq:end_point} for any }\w\in\WW\right\}.
$$

It is not difficult to see that, similarly as stated in \cite{DG2018,GajHer19} for the deterministic case, the robust viability kernel and the set of robust sustainable thresholds are related via the equivalence below, which somehow explains the duality between these two objects: for any $\ic \in \X$ and $\threshold \in \RR^m$ we have

\begin{equation}\label{K-SST}
\ic \in \VV(\threshold)\quad\Longleftrightarrow\quad \threshold \in \TT(\ic). 
\end{equation}

\begin{remark}\label{rem:linkdeterministic}
Notice that if $\scenario \in \Omega:=\bigcap_{k=0}^N \Omega_k$ and we consider the constant scenario $\w_{\scenario}=(\scenario_k)_{k=0}^N $ where $~\scenario_k = \scenario ~$ for $~k=0,\ldots,N$, we can define the set of sustainable threshold associated to $\scenario$ by 
\begin{equation*}%\label{eq:thresholds_set}
	\TT^\scenario(\ic)\defegal\left\{\threshold \in \RR^m \mid \exists \u \in \UU,  \, \u \text{ and } \x^{\w}_{\ic}(\u)\text{ satisfy}   \eqref{eq:constraints} - \eqref{eq:end_point} ~\mbox{ for }~  \w=\w_{\scenario} \right\}.
\end{equation*}
This set corresponds to the deterministic set of sustainable thresholds associated to the dynamics $\dynamics_k(\cdot,\cdot,\scenario)$ in \eqref{eq:system} defined in  \cite{DG2018,GajHer19}. From the definition of the set of robust sustainable threshold $\TT(\ic)$ in \eqref{eq:thresholds_set}, it is straightforward to verify that
$$\TT(\ic) \subseteq \hat \TT(\ic) := \bigcap_{w \in \Omega} \TT^\scenario(\ic) .$$

\end{remark}

\begin{remark}
	Other cases that may be worth studying are when the constraints are satisfied only in some scenarios, which can be quantified with a probability measure $\mathbb{P}$ defined on $\WW$. For example, one may be interested in studying the \emph{set of stochastic sustainable thresholds} related  to a confidence level $\beta\in(0,1]$, defined as follows:
$$		\TT^\beta(\ic)\defegal\left\{\threshold \in \RR^m ~\mid~ \exists ~\u \in \UU,  \, \mathbb{P}\left(\w\in\WW\mid\begin{matrix}\constraints^k(\state_k,\control_k)\geq c, \ \forall k\in\titf{0}{N}\\\fcost(\state_{N+1})\geq c\end{matrix}\right)\geq\beta \right\},$$
where $\u=(\control_k)_{k=0}^{N}$ and $\x_{\ic}^\w(\u)=(\state_k)_{k=0}^{N+1}$. In this case, the relation \eqref {K-SST} can also be stated however by replacing the robust viability kernel with the \emph{stochastic viability kernel} (\cite{DLD}): $$		\VV^\beta(c)\defegal\left\{\ic \in \RR^n ~\mid~ \exists ~\u \in \UU,  \, \mathbb{P}\left(\w\in\WW\mid\begin{matrix}\constraints^k(\state_k,\control_k)\geq c, \ \forall k\in\titf{0}{N}\\\fcost(\state_{N+1})\geq c\end{matrix}\right)\geq\beta \right\}.$$
Research involving the stochastic set of sustainable thresholds is beyond the scope of this paper, and we plan to study it elsewhere.
\end{remark}

As in the deterministic case, the importance of the set of robust sustainable thresholds lies in the trade-off between the number of restrictions $m\in\NN$ and the dimension of the state space $\X$. For problems with several state variables, computing $\VV(\threshold)$ may be too expensive or impractical in terms of computational time, even if there are only a couple of restrictions; this is the so-called \emph{curse of dimensionality in dynamic programming}. However, in the same situation (several state variables with few constraints), the computational time of estimating $\TT(\ic)$ can be considerably \pg{lowered} because, essentially, the complexity of computing $\VV(\threshold)$ and $\TT(\ic)$ is the same, but the latter is an object in a lower-dimensional Euclidean space. This fact  makes somewhat the numerical computation of $\TT(\ic)$ more tractable than the one of $\VV(\threshold)$, as we describe in this work. 

The underlying idea of the the set of robust sustainable thresholds $\TT(\ic)$  is to provide a good picture of the current state $\ic$ in terms of the thresholds that can be maintained in a sustainable way through time. A small set $\TT(\ic)$ means that the current state $\ic$ is vulnerable in the sense that the room for maneuvering in terms of sustainability is reduced. In Figure \ref{fig:S}, we illustrate the set of robust sustainable thresholds for two different initial states $\ic$ and $\ic'$, where \eqref{eq:constraints} and \eqref{eq:end_point} consist of only two constraints (i.e., the threshold space is of dimension two). In this illustration, we can see that the state $\ic$ is better than $\ic'$ in the sense that $\TT(\ic') \subset \TT(\ic)$. \\

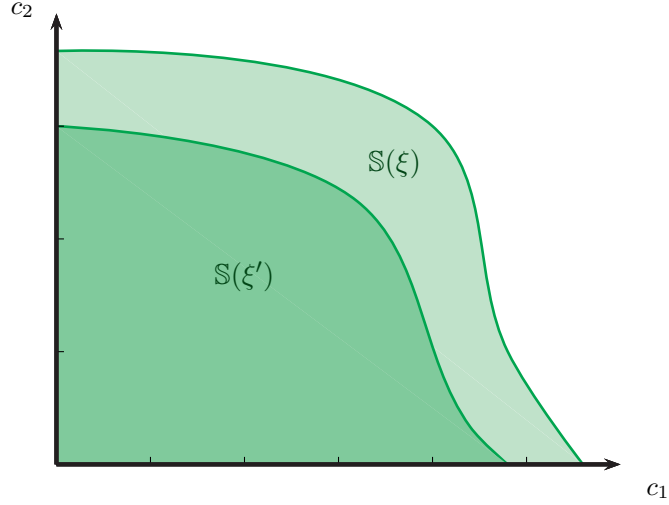
\begin{figure}[h]
\begin{center}
\psset{xunit=.5cm, yunit=.5cm, linewidth=.2pt}

\begin{pspicture}(1,-2)(12,13)

%CURVE  (LARGE)
\pscurve[linewidth=1pt, linecolor=green,fillstyle=solid,fillcolor=palegreen](0,11)(10,9)(12,3)(14,0)
\psline[linewidth=1pt, linecolor=white,fillstyle=solid,fillcolor=palegreen](0,11)(0,0)(14,0)

%CURVE (SMALL)
\pscurve[linewidth=1pt, linecolor=green,fillstyle=solid,fillcolor=lightgreen](0,9)(8,7)(11,1)(12,0)
\psline[linewidth=1pt, linecolor=white,fillstyle=solid,fillcolor=lightgreen](0,9)(0,0)(12,0)
\rput(5,5) {\textcolor{darkgreen}{\large $\TT(\ic')$}}
\rput(9,8) {\textcolor{darkgreen}{\large $\TT(\ic)$}}

% LABELS - TEXT
\rput(-0.9,12.1) {$\threshold_2$}
\rput(16,-0.7) {$\threshold_1$}
\psaxes[linecolor=black,  arrows=->, linewidth=1.5pt,Ox=,Oy=,Dx=20,Dy=2,dx=2.5,dy=3,labels=none,tickstyle=bottom,ticksize=.2](0,0)(15,12)

\end{pspicture}
 \end{center}
 \caption{Sketch of the set of robust sustainable thresholds for two different initial states.}\label{fig:S}
\end{figure}

\subsection{Pareto fronts}

Because of the structure of the constraints \eqref{eq:constraints} and \eqref{eq:end_point}, it is clear that if $\threshold^*\geq c$ (component-wise), then for any $\ic\in\X$, we have

\begin{equation}\label{caractS}
\threshold^*\in\TT(\ic)\quad\Longrightarrow\quad \threshold\in\TT(\ic).\end{equation}

In other words, $\TT(\ic) + \RR^m_- = \TT(\ic)$, and therefore, the set of robust sustainable thresholds can be characterized by its boundary, particularly by its weak Pareto front; \new{See Remark \ref{rem:charact} below}. In this context, let us recall that a given vector $\threshold^*\in\RR^m$ is said to be (Pareto) dominated by another vector ${\threshold}\in\RR^m$ if ${\threshold} \geq \threshold^*$ (component-wise) and there exists $i \in\titf{1}{m}$ such that $\threshold_{i} > {\threshold}^*_{i}$. Additionally, $\threshold^*$ is said to be strongly (Pareto) dominated by ${\threshold}$ if $\threshold > {\threshold}^*$ (component-wise). Therefore, the weak and strong Pareto fronts of any set $S \subset \RR^m$ are defined as follows:
\begin{itemize}

\item[$\bullet$] The strong Pareto front of $S$ is the set of all $\threshold^* \in S$, which are not dominated by another element of $S$:
$$\forall  \threshold\in S,\quad  \threshold\geq \threshold^*\quad\Longrightarrow\quad \threshold= \threshold^* .$$

\item[$\bullet$] The weak Pareto front of $S$ is the collection of all $\threshold^* \in S$, which are not strongly dominated by another element of $S$:
$$\forall \threshold\in S,\quad \exists i\in\titf{1}{m},\quad \threshold^*_i\geq \threshold_i.$$
\end{itemize}
The elements of the strong and weak Pareto front are called (respectively) strong and weak Pareto maxima.
 
Our goal in this paper is to study the weak and strong Pareto fronts of the set of robust sustainable thresholds $\TT(\ic)$ for a given initial condition $\ic\in\X$ \new{in order to obtain a full description of this set}. The approach we have taken is based on optimal control theory similarly as done in \cite{GajHer19}. The details are explained in the next section.

\begin{remark}\label{rem:charact}
\new{First, notice that strong Pareto maxima are also weak Pareto maxima. So, from \eqref{caractS} one can deduce that
\begin{equation}\label{eq:inclusion_fronts}
	 \pareto^\mathcal{S}\left(\TT(\ic)\right)+ \RR^m_- \subseteq \pareto^\mathcal{W}\left(\TT(\ic)\right)+ \RR^m_- \subseteq \TT(\ic),
\end{equation}
where $\pareto^\mathcal{S}\left(S\right)$ and $\pareto^\mathcal{W}\left(S\right)$ denote the strong and weak Pareto fronts of a set $S$, respectively. Thanks to the structure of the constraints \eqref{eq:constraints}-\eqref{eq:end_point} and the standing assumptions, we will be able to prove (see Theorem \ref{thm:pareto_threshols}) that for any $c\in\TT(\ic)$ one can find a strong Pareto maximum $c^*\in\TT(\ic)$ such that $c^*\geq c$. This in turn shows that the inclusions in \eqref{eq:inclusion_fronts} are attained as equalities, and therefore, the strong (or weak) Pareto front of $\TT(\ic)$ allows to recover the whole set of robust sustainable thresholds. Moreover, by the same arguments it follows that $\pareto^\mathcal{S}\left(\TT(\ic)\right)$ is the smallest subset of $\TT(\ic)$ that allows to recover $\TT(\ic)$ by adding the cone $\RR^m_-$, which means that the strong Pareto front of $\TT(\ic)$ can be interpreted in some sense as the extreme points of $\TT(\ic)$ whenever this set is convex.}
\end{remark}

 \subsection{Standing assumptions}

In this work, we assume that the data of the dynamical system \eqref{eq:system} with constraints \eqref{eq:constraints}-\eqref{eq:end_point} satisfy the following basic conditions, which we term \emph{standing assumptions}:
\begin{enumerate}[label=\textbf{(H\arabic*)}]
\item\label{hyp:1} $\dynamics_k(\cdot,\cdot,\scenario)$ is continuous on $\X\times\U$ for any $k\in \titf{0}{N}$ and any $\scenario\in\Omega_k$.
\item\label{hyp:2} For each $k\in \titf{0}{N}$ and $i\in\titf{0}{m}$:
\begin{itemize}
\item $\constraints^k_i$ is upper semicontinuous and bounded below on $\X\times\U$,
\item  $\fcost_i$ is upper semicontinuous and bounded below on $\X$.
\end{itemize}

\item\label{hyp:3} $\X$ is a finite-dimensional Banach space.
\item\label{hyp:4} $\U$ is a nonempty compact metric space.
\end{enumerate}

These hypotheses ensure that, for a given initial condition $\ic\in\X$, a scenario $\w\in\WW$ and a threshold vector $c\in\RR^m$, the set of feasible solutions to the dynamical system \eqref{eq:system}-\eqref{eq:constraints}-\eqref{eq:end_point} is compact in $\XX$. Since 
$\u\mapsto \x_{\ic}^\w(\u)$ is a continuous map, and $\UU$ is a compact metric space,

$$A^\w_\ic\defegal \left\{(\x,\u)\in\XX\times\UU\mid\x=\x_{\ic}^\w(\u)\right\}$$	
is a compact subset of $\XX\times\UU$. Furthermore, the set

$$B^\w_\threshold\defegal\left\{(\x,\u)\in\XX\times\UU~\middle|~\ \constraints^k(\state_k,\control_k)\geq \threshold, \ \forall k\in\titf{0}{N},\ \fcost(\state_{N+1})\geq c \right\}$$
is closed in $\XX\times\UU$. Now, since the set of admissible trajectories is exactly the projection of \pg{$A_\ic\cap B_\threshold$} over $\X$, we conclude that set of feasible solutions to the dynamical system \eqref{eq:system} with  constraints \eqref{eq:constraints}-\eqref{eq:end_point} is compact (possibly empty) in $\XX$. A similar argument shows that the set of admissible controls for the dynamical system \eqref{eq:system} with  constraints \eqref{eq:constraints}-\eqref{eq:end_point} is compact in $\UU$.

\section{the set of robust sustainable thresholds through optimal control}\label{sec:SRST_OC}

Let us consider a generic maximin optimal control problem and its corresponding optimal value (viewed as a function of the threshold vector):
 
 \begin{equation}\label{eq:vf}
\fv{\ic}{}(\threshold)\defegal\sup_{\u\in\UU}\left\{\inf_{\w\in\WW}\gcost{}(\x_{\ic}^\w(\u),\u) ~ \middle|\ \u\text{ and } \x_{\ic}^\w(\u)\text{ satisfy \eqref{eq:constraints}-\eqref{eq:end_point}, }\forall \w\in\WW\right\},
 \end{equation} 
where the function $\gcost{}: \XX\times\UU \to \RR$ is a generic function that will take a closed form when appropriate.

As it turns out, the set of robust sustainable thresholds $\TT(\ic)$ is the collection of all  thresholds $\threshold$ for which $\fv{\ic}{}(\threshold)$ is finite; this holds true for any bounded below and upper semicontinuous choice we make for the cost $\gcost{}$, as it is shown below.
 
  \begin{proposition}\label{prop:equivalence}Assume that $\gcost{}: \XX\times\UU \to \RR$ is bounded below and upper semicontinuous. Then, for any $\ic \in \X$ and $c\in\RR^m $, one has
 
	$$\threshold \in \TT(\ic) \quad\Longleftrightarrow\quad \fv{\ic}{}(\threshold)\in\RR.$$
	Furthermore, in any of these two cases, there is an optimal control for the optimization problem associated with  $\fv{\ic}{}(\threshold)$.
%	
%	$$\pi(\u):=\inf_{\w\in\WW}\left\{\gcost{}(\x_{\ic}^\w(\u),\u) ~ \middle|\ \u\text{ and } \x_{\ic}^\w(\u)\text{ satisfy \eqref{eq:constraints}-\eqref{eq:end_point}}\right\}.$$ 
 \end{proposition}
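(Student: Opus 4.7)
I will introduce the shorthand
$$\Phi(\u) \defegal \inf_{\w\in\WW}\gcost{}(\x_{\ic}^\w(\u),\u) \quad \text{and}\quad \mathcal{U}_\threshold(\ic) \defegal \left\{\u\in\UU\mid \u \text{ and }\x_\ic^\w(\u)\text{ satisfy \eqref{eq:constraints}-\eqref{eq:end_point}},\ \forall \w\in\WW\right\},$$
so that $\fv{\ic}{}(\threshold)=\sup_{\u\in\mathcal{U}_\threshold(\ic)}\Phi(\u)$ with the convention $\sup\emptyset=-\infty$. By definition of $\TT(\ic)$ one has $\threshold\in\TT(\ic)\iff\mathcal{U}_\threshold(\ic)\neq\emptyset$. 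This already gives the ``$\Leftarrow$'' direction: if $\fv{\ic}{}(\threshold)\in\RR$ then the supremum is not over an empty set, so $\mathcal{U}_\threshold(\ic)\neq\emptyset$ and $\threshold\in\TT(\ic)$.

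For ``$\Rightarrow$'', assume $\mathcal{U}_\threshold(\ic)\neq\emptyset$; I need to show that $\Phi$ is bounded on this set. The lower bound is immediate: since $\gcost{}$ is bounded below by some $M\in\RR$, so is $\Phi$. For the upper bound, the key observation is that $\Phi$ is upper semicontinuous on the compact metric space $\UU$. Indeed, iterating \ref{hyp:1} shows that, for every fixed $\w\in\WW$, the map $\u\mapsto\x_\ic^\w(\u)$ is continuous from $\UU$ to $\XX$; composing with the USC map $\gcost{}$ yields an USC function $\u\mapsto\gcost{}(\x_\ic^\w(\u),\u)$ on $\UU$; finally, the pointwise infimum over $\w$ of a family of USC functions is USC (its strict sublevel sets are unions of open sets, hence open). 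Combined with compactness of $\UU$ (from \ref{hyp:4}), this yields a finite upper bound for $\Phi$ on $\UU$, so $\fv{\ic}{}(\threshold)\in\RR$.

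For the existence of an optimal control, I will use the same ingredients. The set $\mathcal{U}_\threshold(\ic)$ coincides with the intersection, over all $\w\in\WW$, of the sets of admissible controls for the single-scenario problem, each of which is compact in $\UU$ by the argument already given at the end of Section~\ref{sec:preliminaries} (closedness follows from continuity of $\u\mapsto\x_\ic^\w(\u)$ together with the USC of $\constraints^k$ and $\fcost$, and compactness from \ref{hyp:4}). Hence $\mathcal{U}_\threshold(\ic)$ is compact as an intersection of compact sets in the Hausdorff space $\UU$. As $\Phi$ is USC on $\UU$, its restriction to the nonempty compact set $\mathcal{U}_\threshold(\ic)$ attains its maximum, which provides the optimal control.

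The only step that requires any care is the USC of $\Phi$, and in particular the fact that pointwise infima of USC families remain USC (whereas the analogous statement for lower semicontinuity requires finiteness). Everything else is a direct consequence of the standing assumptions and the compactness observations already made in Section~\ref{sec:preliminaries}.
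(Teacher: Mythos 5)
Your proof is correct and follows essentially the same route as the paper: upper semicontinuity of $\u\mapsto\inf_{\w\in\WW}\gcost{}(\x_{\ic}^\w(\u),\u)$ (finite because $\gcost{}$ is bounded below), compactness of the admissible control set $\bigcap_{\w\in\WW}\UU_\ic^\w(\threshold)$, and Weierstrass to get attainment and finiteness. The only difference is cosmetic — you bound $\Phi$ from above on all of $\UU$ rather than only on the feasible set — and your explicit justification that a pointwise infimum of USC functions is USC is a detail the paper merely asserts.
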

\begin{proof}
Let us point out that the mapping $\u\mapsto\x_{\ic}^\w(\u)$ is continuous for any $\ic\in\X$ and $\w\in\WW$ fixed, and so $\u\mapsto\gcost{}(\x_{\ic}^\w(\u),\u)$ is upper semicontinuous. This implies that the functional $\u\mapsto\displaystyle\inf_{\w\in\WW}\gcost{}(\x_{\ic}^\w(\u),\u)$ is upper semicontinuous too. Notice that this functional is also finite, because $\gcost{}$ is bounded below and $\WW$ is nonempty. Moreover, for any $\ic\in\X$ and $\w\in\WW$ fixed, let
$$\UU_\ic^\w(\threshold):=\left\{\u\in\UU\mid\u\text{ and } \x_{\ic}^\w(\u)\text{ satisfy \eqref{eq:constraints}-\eqref{eq:end_point}}\right\}$$
be the set of admissible controls for the dynamical system \eqref{eq:system} with  constraints \eqref{eq:constraints}-\eqref{eq:end_point}. As remarked earlier, this set is closed in $\UU$ for any given $\threshold\in\RR^m$, and therefore compact. In particular, since
$$\left\{\u\in\UU\mid\u\text{ and } \x_{\ic}^\w(\u)\text{ satisfy \eqref{eq:constraints}-\eqref{eq:end_point}, }\forall \w\in\WW\right\}=\bigcap_{\w\in\WW}\UU_\ic^\w(\threshold),$$
the set on the righthand side is compact as well.

Notice that $\threshold \in\TT(\ic)$ if and only if $\bigcap_{\w\in\WW}\UU_\ic^\w(\threshold)$ is nonempty. Therefore, the maximum in the definition of $\fv{\ic}{}(\threshold)$ is attained, and so $\fv{\ic}{}(\threshold)<+\infty$; the latter is a consequence of maximizing a finite and upper semicontinuous map over a nonempty compact set. Furthermore, if $\fv{\ic}{}(\threshold)<+\infty$, then clearly $\bigcap_{\w\in\WW}\UU_\ic^\w(\threshold)$ and the conclusion follows.

\end{proof}

This relation implies that if one wants to determine the set of robust sustainable thresholds, one may instead solve an optimization problem to check whether a given threshold is sustainable for a given initial state. Furthermore, this also suggests that, to compute the strong Pareto front of $\TT(\ic)$, one might try to construct a suitable functional $\gcost{}$ from the constraint mapping $\constraints$. Inspired by this idea, we provide a scheme for constructing the strong Pareto front of $\TT(\ic)$. The main feature of this procedure is that it works from the inside in the sense that, starting from a given sustainable threshold, it provides a strong Pareto maximum.

\subsection{A characterization of the strong Pareto front}

We now show that for a given initial state $\ic\in\X$, the strong Pareto maxima of $\TT(\ic)$ can be computed by solving a sequence of $m$ (the dimension of the constraint space) optimal control problems. For this purpose, we will construct a scheme by considering a sequence of maximin problems obtained by setting the functional $\gcost{}$ as
\begin{align}\label{eq:cost_pareto}
\gcost{}^i(\x,\u)\defegal\min\left\{\bigwedge_{k=0}^{N}\constraints^k_i(\state_k,\control_k),\fcost_i(\state_{N+1})\right\},\qquad\text{for some } i\in\titf{1}{m}.
\end{align}
where, for any $p,q\in\NN$ and $(a_k)_{k=p}^q$ we use the notation
$$\bigwedge_{k=p}^{q}a_k:=\min_{k=p,\ldots,q}a_k.$$
 
For the sake of exposition, let us introduce the mapping $\bfthreshold:\UU\to\RR^m$  given by
\begin{align}\label{eq:threshold_operator}
	\bfthreshold(\u)\defegal\left(\inf_{\w\in\WW}\gcost{}^1(\x_{\ic}^\w(\u),\u),\ldots,\inf_{\w\in\WW}\gcost{}^m(\x_{\ic}^\w(\u),\u)\right),\qquad\forall \u\in\UU.
\end{align}
\begin{remark}\label{rem:threshold_map}
Since we are assuming that each $\constraints_i$ is bounded below, it follows that the images of the mapping $\bfthreshold:\UU\to\RR^m$ introduced above are well-defined (the infima are finite) and they are actually sustainable thresholds. Indeed, let $\u=(\control_k)_{k=0}^{N}\in\UU$ be a control, $\w\in\WW$ a scenario and $\x_{\ic}^\w(\u)=(\state_k)_{k=0}^{N+1}$ be the corresponding trajectory. Then, in particular, $\bfthreshold(\u)\in\TT(\ic)$ because, by definition, we have
	$$\constraints^k(\state_k,\control_k)\geq \bfthreshold(\u)\quad \text{(component-wise)}, \ \forall k\in\titf{0}{N}$$
	and 
		$$\fcost(\state_{N+1})\geq \bfthreshold(\u)\quad \text{(component-wise)}.$$
\end{remark}

\begin{definition}\label{def:operator}
	Given $\ic\in\X$, a set-valued map $\pareto_\ic:\titf{1}{m}\times\TT(\ic)\rightrightarrows\RR^m$ is said to be a Pareto operator provided that
$$\pareto_\ic(i,c)=\left\{\bfthreshold(\u) ~|~\u\in\UU \mbox{ is optimal for }~\fv{\ic}{i}(\threshold)\right\}, $$
where %$\u^i\in\UU$ is an optimal control for
\begin{align}\label{eq:value_pareto}
	\fv{\ic}{i}(\threshold)\defegal\displaystyle\sup_{\u\in\UU}\left\{\inf_{\w\in\WW}\gcost{}^i(\x_{\ic}^\w(\u),\u)  ~ \middle|\ \u\text{ and } \x_{\ic}^\w(\u)\text{ satisfy \eqref{eq:constraints}-\eqref{eq:end_point}, }\forall \w\in\WW\right\}.
\end{align}
\end{definition}	
Note that in Definition \ref{def:operator}, $\gcost{}^i$ and $\bfthreshold$ are the mappings given by \eqref{eq:cost_pareto} and \eqref{eq:threshold_operator}, respectively. Let us now see that a Pareto operator has nonempty values, and moreover, its images are robust sustainable thresholds.

\begin{proposition}\label{prop:pareto_operator}
	For any $\ic\in\X$, a Pareto operator $\pareto_\ic:\titf{1}{m}\times\TT(\ic)\rightrightarrows\RR^m$ has nonempty values and
	$$\pareto_\ic(i,\threshold)\subset\TT(\ic),\qquad\forall i\in\titf{1}{m},\ \threshold\in\TT(\ic).$$
\end{proposition}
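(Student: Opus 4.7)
The plan is to reduce Proposition \ref{prop:pareto_operator} to a direct application of Proposition \ref{prop:equivalence} (to handle nonemptyness of the argmax defining the Pareto operator) combined with Remark \ref{rem:threshold_map} (to handle the inclusion in $\TT(\ic)$). Thus there are really two things to check: first, that the auxiliary cost $\gcost{}^i$ satisfies the assumptions of Proposition \ref{prop:equivalence}; and second, that the optimum in \eqref{eq:value_pareto} is attained.

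\textbf{Step 1: regularity of $\gcost{}^i$.} I would verify that $\gcost{}^i:\XX\times\UU\to\RR$ defined in \eqref{eq:cost_pareto} is upper semicontinuous and bounded below. By \ref{hyp:2}, each $\constraints^k_i$ and $\fcost_i$ is upper semicontinuous and bounded below on its domain. Since the coordinate projections $(\x,\u)\mapsto(\state_k,\control_k)$ and $\x\mapsto\state_{N+1}$ are continuous, the maps $(\x,\u)\mapsto\constraints^k_i(\state_k,\control_k)$ and $(\x,\u)\mapsto\fcost_i(\state_{N+1})$ inherit upper semicontinuity and are bounded below on $\XX\times\UU$. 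As $\gcost{}^i$ is the minimum of finitely many such functions, it is itself upper semicontinuous and bounded below.

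\textbf{Step 2: nonemptyness via Proposition \ref{prop:equivalence}.} Fix $i\in\titf{1}{m}$ and $\threshold\in\TT(\ic)$. The value function $\fv{\ic}{i}(\threshold)$ in \eqref{eq:value_pareto} is exactly the functional $\fv{\ic}{}(\threshold)$ from \eqref{eq:vf} applied with the specific cost $\gcost{}=\gcost{}^i$. Since $\gcost{}^i$ has been shown to meet the hypotheses of Proposition \ref{prop:equivalence}, and since $\threshold\in\TT(\ic)$, that proposition yields $\fv{\ic}{i}(\threshold)\in\RR$ together with the existence of an optimal control $\u^*\in\UU$ for the associated maximin problem. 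Consequently $\bfthreshold(\u^*)\in\pareto_\ic(i,\threshold)$, so $\pareto_\ic(i,\threshold)\neq\emptyset$.

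\textbf{Step 3: inclusion in $\TT(\ic)$.} For the second claim, I would invoke Remark \ref{rem:threshold_map} directly. Indeed, for \emph{every} $\u\in\UU$ one has $\bfthreshold(\u)\in\TT(\ic)$, because taking $\threshold=\bfthreshold(\u)$ makes the mixed and endpoint constraints \eqref{eq:constraints}--\eqref{eq:end_point} hold for the trajectory $\x_{\ic}^\w(\u)$ under every scenario $\w\in\WW$ by construction of the infimum in \eqref{eq:threshold_operator}. In particular this applies to every optimal $\u$ for $\fv{\ic}{i}(\threshold)$, so $\pareto_\ic(i,\threshold)\subseteq\TT(\ic)$. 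No step is a genuine obstacle here; the only point requiring care is the regularity check on $\gcost{}^i$ in Step~1, which is needed to justify invoking Proposition \ref{prop:equivalence}.
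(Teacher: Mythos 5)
Your proposal is correct and follows essentially the same route as the paper: it verifies that each $\gcost{}^i$ is upper semicontinuous and bounded below, invokes Proposition \ref{prop:equivalence} for the existence of an optimal control (hence nonemptyness), and uses Remark \ref{rem:threshold_map} for the inclusion in $\TT(\ic)$. The only difference is that you spell out the regularity check on $\gcost{}^i$, which the paper states without detail.
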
 
\begin{proof}
	Since each functional $\gcost{}^i$ is bounded below and upper semicontinuous, by Proposition \ref{prop:equivalence} we have that for a given $\threshold\in\TT(\ic)$, there is an optimal control for the optimization problem associated with $\fv{\ic}{i}(\threshold)\in\RR$. Thus,  $\pareto_\ic(i,\threshold)$ is nonempty.
		
	Finally, the fact that $\pareto_\ic(i,\threshold) \subset \TT(\ic)$ follows by Remark \ref{rem:threshold_map}.
\end{proof}

With the notion of the Pareto operator at hand, we are now ready to introduce a scheme for finding strong Pareto maxima of the set of robust sustainable thresholds as claimed above. \new{In particular, the following theorem implies that for any $c\in\TT(\ic)$ one can find a strong Pareto maximum $c^*\in\TT(\ic)$ such that $c^*\geq c$ (component-wise) as claimed in Remark \ref{rem:charact}.}

\begin{theorem}\label{thm:pareto_threshols} For any initial condition $\ic\in\X$, sustainable threshold $\threshold^0\in\TT(\ic)$ and permutation $\sigma:\titf{1}{m}\to\titf{1}{m}$, consider the sequence $\threshold^1,\ldots,\threshold^m$ generated inductively by a Pareto operator $\pareto_\ic:\titf{1}{m}\times\TT(\ic)\rightrightarrows\TT(\ic)$ as follows:
$$\threshold^{i} \in \pareto_\ic(\sigma(i),\threshold^{i-1}),\quad i\in\titf{1}{m}.$$

Then, $\threshold^m$ belongs to the strong Pareto front of $\TT(\ic)$ with
$$\threshold^m\geq \threshold^{m-1}\geq\ldots\geq \threshold^{1}\geq \threshold^0\quad\text{(component-wise)}$$
and
$$\fv{\ic}{\sigma(i)}(\threshold^{i-1})= \threshold_{\sigma(i)}^{j},\qquad\forall i\in\titf{1}{m},\ j\in\titf{i}{m}.$$
Here, $\fv{\ic}{i}(\cdot)$ is given by \eqref{eq:value_pareto}. In particular, 
$$\threshold^m=\left(\fv{\ic}{1}\left(\threshold^{\sigma(1)-1}\right),\ldots,\fv{\ic}{m}\left(\threshold^{\sigma(m)-1}\right)\right).$$
\end{theorem}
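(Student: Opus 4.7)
The plan is to prove the three assertions (monotonicity, the equalities $\fv{\ic}{\sigma(i)}(\threshold^{i-1})=\threshold^j_{\sigma(i)}$, and the strong Pareto maximality of $\threshold^m$) one after the other. Throughout, let $\u^i \in \UU$ denote a control optimal for $\fv{\ic}{\sigma(i)}(\threshold^{i-1})$ such that $\threshold^i=\bfthreshold(\u^i)$, which exists by Definition~\ref{def:operator} and Proposition~\ref{prop:pareto_operator}.

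\medskip

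\textbf{Step 1 (monotonicity).} Since $\u^i$ is admissible for \eqref{eq:constraints}-\eqref{eq:end_point} at threshold $\threshold^{i-1}$ and for every $\w\in\WW$, I take the minimum over $k\in\titf{0}{N}$ of the $\ell$-th component of the constraints and the infimum over $\w\in\WW$. By the very definition of $\gcost^\ell$ and $\bfthreshold$ in \eqref{eq:cost_pareto}-\eqref{eq:threshold_operator}, this yields $\bfthreshold(\u^i)_\ell \geq \threshold^{i-1}_\ell$ for each $\ell\in\titf{1}{m}$, i.e.\ $\threshold^i\geq \threshold^{i-1}$ component-wise, and the chain of inequalities follows by iteration.

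\medskip

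\textbf{Step 2 (the equalities).} By optimality, $\threshold^i_{\sigma(i)}=\inf_{\w\in\WW}\gcost^{\sigma(i)}(\x^\w_\ic(\u^i),\u^i)=\fv{\ic}{\sigma(i)}(\threshold^{i-1})$, which handles the case $j=i$. For $j>i$, I combine two opposite inequalities on $\threshold^j_{\sigma(i)}$. The lower bound $\threshold^j_{\sigma(i)} \geq \threshold^i_{\sigma(i)}$ is Step~1. For the upper bound I observe that, by Step~1, $\u^j$ is feasible for \eqref{eq:constraints}-\eqref{eq:end_point} at threshold $\threshold^{j-1}\geq \threshold^{i-1}$, hence also at threshold $\threshold^{i-1}$, so it is admissible in the maximin problem defining $\fv{\ic}{\sigma(i)}(\threshold^{i-1})$; therefore
\[
\threshold^j_{\sigma(i)}=\inf_{\w\in\WW}\gcost^{\sigma(i)}(\x^\w_\ic(\u^j),\u^j) \leq \fv{\ic}{\sigma(i)}(\threshold^{i-1})=\threshold^i_{\sigma(i)}.
\]
This forces equality, giving the displayed formula; the special case $j=m$ applied to all $i\in\titf{1}{m}$ yields the concluding expression for $\threshold^m$ because $\sigma$ is a permutation.

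\medskip

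\textbf{Step 3 (strong Pareto maximality).} I will argue directly: let $c\in\TT(\ic)$ with $c\geq \threshold^m$ and pick any admissible $\u\in\UU$ witnessing $c\in\TT(\ic)$. Then $\bfthreshold(\u)\geq c$ by the same reasoning as in Step~1 (take min over $k$ and inf over $\w$). Since $c\geq\threshold^m\geq\threshold^{i-1}$ for every $i$, the control $\u$ is feasible in the maximin problem defining $\fv{\ic}{\sigma(i)}(\threshold^{i-1})$, whence $\bfthreshold(\u)_{\sigma(i)}\leq \fv{\ic}{\sigma(i)}(\threshold^{i-1})=\threshold^m_{\sigma(i)}$ by Step~2. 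Chaining these inequalities,
\[
c_{\sigma(i)} \leq \bfthreshold(\u)_{\sigma(i)} \leq \threshold^m_{\sigma(i)} \leq c_{\sigma(i)},
\]
so $c_{\sigma(i)}=\threshold^m_{\sigma(i)}$ for every $i$; as $\sigma$ is a permutation, $c=\threshold^m$, proving that $\threshold^m$ is a strong Pareto maximum of $\TT(\ic)$.

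\medskip

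The only delicate point is the upper bound in Step~2: one has to recognize that the monotonicity established in Step~1 makes every later control $\u^j$ admissible for the earlier optimization problem, which is exactly what lets the later coordinates stop growing in the directions already optimized. Everything else is bookkeeping with the definitions of $\gcost^i$, $\bfthreshold$ and the Pareto operator.
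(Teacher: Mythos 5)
Your proof is correct and follows essentially the same route as the paper's: monotonicity from feasibility of each optimal control at the previous threshold, the equalities from the fact that later controls remain feasible for earlier problems, and strong Pareto maximality from the observation that any witness of a dominating $c\in\TT(\ic)$ is feasible for each problem $\fv{\ic}{\sigma(i)}(\threshold^{i-1})$. The only cosmetic differences are that you handle a general permutation $\sigma$ directly rather than reducing to the identity, and you phrase the maximality argument as a direct sandwich $c_{\sigma(i)}\leq\bfthreshold(\u)_{\sigma(i)}\leq\threshold^m_{\sigma(i)}\leq c_{\sigma(i)}$ instead of the paper's contradiction.
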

\begin{proof}
For the sake of simplicity, let us consider only the case where the permutation $\sigma$ is the identity; that is, $\sigma(i)=i$ for any $i\in\titf{1}{m}$. For more general permutations, the proof is analogous (it is sufficient to redefine the constraint mappings $\constraints^0,\ldots,\constraints^N$ and $\fcost$ by changing the order of their components).

Note that the sequence $\threshold^1,\ldots,\threshold^m$ is well-defined. Indeed, this is a straightforward consequence of the induction principle and Proposition \ref{prop:pareto_operator}. In particular, we have $\threshold^1,\ldots,\threshold^m\in\TT(\ic)$.

Let us continue by showing that $\threshold^{i}\geq \threshold^{i-1}$ for any $i\in\titf{1}{m}$. Given that a control $\u^i$ in the definition of $\pareto_\ic(i,\threshold^{i-1})$ (see Definition \ref{def:operator}) is, in particular, a feasible control for problem $\fv{\ic}{i}(\threshold^{i-1})$; we have
 $$\threshold^i_j=\inf_{\w\in\WW}\min\left\{\bigwedge_{k=0}^{N}\constraints^k_j(\state^i_k,\control^i_k),\fcost_j(\state^i_N)\right\}\geq\threshold^{i-1}_j,\qquad\forall j\in\titf{1}{m},
$$
where $\u^i=(\control_k^i)_{k=0}^{N}$ and $\x_{\ic}^\w(\u^i)=(\state_k^i)_{k=0}^{N+1}$. Therefore, we have $\threshold^{i}\geq \threshold^{i-1}$ (component-wise) for any $i\in\titf{1}{m}$.
 	
Let us now prove that $\threshold^m$ is a strong Pareto maximum. Let $c\in\TT(\ic)$ be such that $\threshold\geq \threshold^m$. Assume for  the sake of contradiction that $\threshold\neq \threshold^m$. Let $i\in\{1,\ldots,m\}$ be an index such that $\threshold_i>\threshold^m_i$. In particular, we have
$$\threshold_i>\threshold^m_i\geq \threshold^i_i=\fv{\ic}{i}(\threshold^{i-1}).$$
However, since $c\in\TT(\ic)$, there is a control $\u=(\control_k)_{k=0}^{N}\in\UU$ such that $\u$ and $\x_{\ic}^\w(\u)=(\state_k)_{k=0}^{N+1}$ satisfy for any scenario $\w\in\WW$ that
$$\min\left\{\bigwedge_{k=0}^{N}\constraints^k(\state_k,\control_k),\fcost(\state_{N+1})\right\}\geq \threshold\geq \threshold^m\geq c^{i-1}.$$
This means that $\u$ is feasible for the optimal control problem associated with $\fv{\ic}{i}(\threshold^{i-1})$, and thus by definition 
$$\fv{\ic}{i}(\threshold^{i-1})\geq \inf_{\w\in\WW}\min\left\{\bigwedge_{k=0}^{N}\constraints^k_i(\state_k,\control_k),\fcost_i(\state_{N+1})\right\}\geq c_i;$$ this leads to a contradiction. Therefore, $\threshold=\threshold^m$, and consequently, $\threshold^m$ is a strong Pareto maximum.
 	
Finally, we have by definition that $\threshold^i_i=\fv{\ic}{i}(\threshold^{i-1})$ for any $i\in\titf{1}{m}$. Thus, let $i\in\titf{1}{m-1}$ and $j\in\titf{i+1}{m}$. Then, since $\threshold^{j}\geq\threshold^{i}\geq\threshold^{i-1}$, we have that $\u^{j}$, the optimal control given in Definition \ref{def:operator}, and its corresponding optimal trajectory are feasible for the optimal control problem associated with $\fv{\ic}{i}(\threshold^{i-1})$. In particular, we must have
\begin{equation*}
\begin{aligned}
	\fv{\ic}{i}(\threshold^{i-1})&=\inf_{\w\in\WW}\min\left\{\bigwedge_{k=0}^{N}\constraints^k_i(\state^i_k,\control^i_k),\fcost_i(\state^i_N)\right\}\\
	&\geq \inf_{\w\in\WW}\min\left\{\bigwedge_{k=0}^{N}\constraints^k_i(\state^j_k,\control^j_k),\fcost_i(\state^j_N)\right\}\\&=\threshold^j_i\geq \threshold^i_i.	
\end{aligned}
\end{equation*}

Since, $\threshold^i_i=\fv{\ic}{i}(\threshold^{i-1})$, the conclusion follows
\end{proof}

To give an idea of what the sequence $\threshold^1,\ldots,\threshold^m$ generated by the preceding theorem looks like, we describe a situation with a threshold space of dimension $m=2$ in Figure \ref{fig:sP}.

\begin{figure}[t]
\begin{center}
\psset{xunit=.5cm, yunit=.5cm, linewidth=.2pt}

\begin{pspicture}(1,-2)(12,13)
\psaxes[linecolor=black,  arrows=->, linewidth=1.5pt,Ox=,Oy=,Dx=20,Dy=2,dx=2.5,dy=3,labels=none,tickstyle=bottom,ticksize=.2](-1,0)(15,12)
%%TICK LABELS

%%CURVE  (LARGE)
\psline[linewidth=1pt, linecolor=lightgreen,fillstyle=solid,fillcolor=lightgreen](-0.95,11)(3,11)(7,8)(7,0.1)(-0.95,0.1)
\psline[linewidth=1pt, linecolor=lightgreen,fillstyle=solid,fillcolor=lightgreen](7,3)(12.8,0.1)(7,0.1)
\pscurve[linewidth=1pt,fillstyle=solid,fillcolor=lightgreen](3,11)(6,10)(7,8)
\pscurve[linewidth=1pt,fillstyle=solid,fillcolor=lightgreen](7,3)(10,2)(13,0)  
  \psline[linewidth=1.5pt, linecolor=blue,linestyle=dashed,arrows=->](1,5)(7.1,5)
    \psline[linewidth=1.5pt, linecolor=blue,linestyle=dashed,arrows=->](7,5)(7,8)
    
      \psline[linewidth=1.5pt, linecolor=red,linestyle=dashed,arrows=->](1,5)(1,11)
    \psline[linewidth=1.5pt, linecolor=red,linestyle=dashed,arrows=->](1,11)(3,11)
\rput(4,2) {\textcolor{darkgreen}{\large $\TT(\ic)$}}
\rput[tr](0.8,5.2) {\textcolor{black}{$\threshold^0$}}
\rput[tl](7.2,5) {\textcolor{black}{ $\pareto_\ic(1,\threshold^0)$}}
\rput[bl](7.2,8) {\textcolor{black}{ $\pareto_\ic(2,\threshold^1)$}}
\rput[bl](-0.5,11.2) {\textcolor{black}{ $\pareto_\ic(2,\threshold^0)$}}
\rput[bl](3,11.2) {\textcolor{black}{ $\pareto_\ic(1,\threshold^1)$}}
 % \psline[fillstyle=solid,fillcolor=gray,linewidth=1.5pt, linecolor=gray,linestyle=dashed, opacity=0.5](10,5)(5,5)(5,2)(10,2)(10,5)
\pscircle[linewidth=1pt,fillstyle=solid,fillcolor=lightgreen](7,5){2pt}
  \psdot[dotsize=4pt](7,8)
    \psdot[dotsize=4pt](3,11)
    \psdot[dotsize=4pt](1,5)
      \pscircle[linewidth=1pt,fillstyle=solid,fillcolor=lightgreen](7,3){2pt}
\pscircle[linewidth=1pt,fillstyle=solid,fillcolor=lightgreen](1,11){2pt}

% LABELS - TEXT
\rput(-1.9,12.1) {$\threshold_2$}
\rput(16,0) {$\threshold_1$}

\end{pspicture}
 \end{center}
 \caption{Sketch of sequence of thresholds generated by Theorem \ref{thm:pareto_threshols} for a problem with $m=2$. The black line indicates the strong Pareto front of the example. Here, $\pareto_\ic(1,\threshold^1)$ and $\pareto_\ic(2,\threshold^1)$ are the two strong Pareto maxima found by the scheme using the two possible permutations on $\titf{1}{2}$ starting from $\threshold^0\in\TT(\ic)$.}\label{fig:sP}
\end{figure}
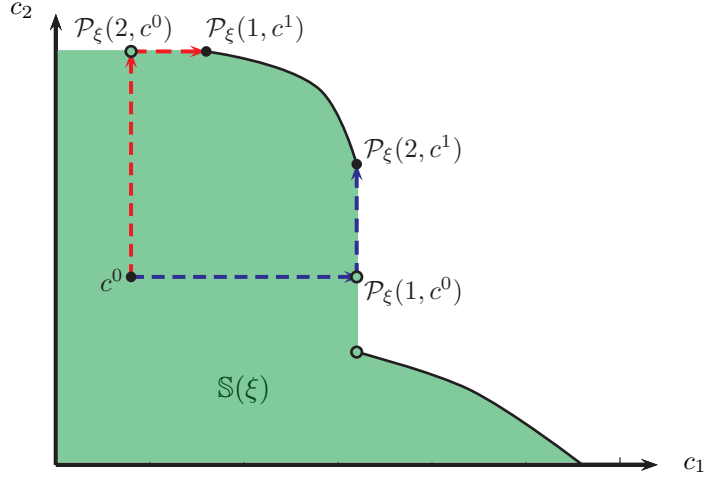

\begin{remark}
\new{In Theorem \ref{thm:pareto_threshols}, it is not difficult to see that if  for an initial condition $\ic\in\X$, the  thresholds vector $\threshold^0\in\TT(\ic)$ already belongs to the strong Pareto front of  $\TT(\ic)$, then by definition of strong Pareto maxima, the sequence of thresholds $\threshold^1,\ldots,\threshold^m$ generated by the proposed method is equal to $\threshold^0$.}
\end{remark}

\section{The weak Pareto front}\label{weakpf}

Let us now focus on the weak Pareto front of the set of robust sustainable thresholds. As with the strong Pareto front, we will present a method for computing the weak Pareto front by means of optimal control tools. However, in this case, we will consider a method that identifies elements in the set from the outside; that is, we will construct a weak Pareto maximum from threshold vectors that are not sustainable for the given initial condition.
 In particular, this means that the optimal control problems we are considering do not require forcing the constraints and thus are unconstrained problems. This helps somewhat in reducing the computational time.

To begin, we introduce the optimal control problem 
\begin{equation}\label{eq:Wc}
\aux{\ic}{\threshold}=\max_{\u\in\UU}\inf_{\w\in\WW}\left\{ \min\left\{\bigwedge_{k=0}^{N}\Phi_k^\threshold(\state_k,\control_k),\Theta^\threshold(\state_{N+1})\right\}~\middle|~\ \x_{\ic}^\w(\u)=(\state_k)_{k=0}^{N+1}\right\}
\end{equation}
where $\threshold\in\RR^m$ is a given threshold vector, $\Phi_k^\threshold:\X\times\U\to\RR$ is given for any $k\in\titf{0}{N}$ by
$$ \Phi_k^\threshold(\state,\control)=\bigwedge_{i=1}^m\constraints^k_i(\state,\control)-c_i$$
and
$\Theta^\threshold:\X\to\RR$ is given by
$$ \Theta^\threshold(\state)=\bigwedge_{i=1}^m\fcost_i(\state)-c_i$$

Note that, since $\UU$ is (nonempty) compact and the functional to be maximized in the definition of $\aux{\ic}{\threshold}$ is upper semicontinuous (it is the infimum of upper semicontinuous functions that depend on each scenario $\w$), the use of the maximum instead of the supremum in $\aux{\ic}{\threshold}$ is justified, which means that the optimal value $\aux{\ic}{\threshold}$ is attained at some optimal control $\u\in\UU$. Moreover, this implies that for any $\ic\in\X$, we have 
\begin{equation}\label{eq:equiv}
\threshold \in \TT(\ic)~\Longleftrightarrow~ \aux{\ic}{\threshold} \geq 0.
\end{equation}
This equivalence shows the strong link between the level-set of the value function $\aux{\ic}{\threshold}$ and the set $\TT(\ic)$. Furthermore, there is a rather straightforward way to construct a point in the weak Pareto front of $\TT(\ic)$ from any given \emph{unsustainable} threshold $\threshold \in\RR^m$ through the value function $\aux{\ic}{\threshold}$. In this context, unsustainable means that $\aux{\ic}{\threshold}<0$, which is equivalent to saying that for any control $\u\in\UU$ there is a scenario $\w\in\WW$, an index $i\in\titf{1}{m}$ and an instant $k\in\titf{0}{N}$ such that $\constraints^k_i(\state_k,\control_k)<\threshold_i$ or $\fcost_i(\state_{N+1})<\threshold_i$, where $\u=(\control_k)_{k=0}^{N}$  and $\x_{\ic}^\w(\u)=(\state_k)_{k=0}^{N+1}$. We describe this situation in Figure \ref{fig:wP}.

\begin{figure}[t]
\begin{center}
\psset{xunit=.5cm, yunit=.5cm, linewidth=.2pt}

\begin{pspicture}(1,-2)(12,13)
\psaxes[linecolor=black,  arrows=->, linewidth=1.5pt,Ox=,Oy=,Dx=20,Dy=2,dx=2.5,dy=3,labels=none,tickstyle=bottom,ticksize=.2](0,0)(15,12)
%
%%TICK LABELS
%
%%CURVE  (LARGE)
\psline[linewidth=1pt, linecolor=lightgreen,fillstyle=solid,fillcolor=lightgreen](0.05,9)(1,9)(7,6)(7,0.1)(0.05,0.1)
\psline[linewidth=1pt, linecolor=lightgreen,fillstyle=solid,fillcolor=lightgreen](7,3)(7,0.1)(13,0.1)
 \psline[linewidth=1.5pt, linestyle=dashed](0,9)(1,9)
 \pscurve[linewidth=1.5pt,fillstyle=solid,fillcolor=lightgreen,linestyle=dashed](1,9)(5,8)(7,6)
 \psline[linewidth=1.5pt, linestyle=dashed](7,6)(7,3)
 \pscurve[linewidth=1.5pt,fillstyle=solid,fillcolor=lightgreen,linestyle=dashed](7,3)(12,1)(13,0)

\rput(3,2) {\textcolor{darkgreen}{\large $\TT(\ic)$}}
\rput[bl](8.2,11.2) {\textcolor{black}{$\threshold^0$}}
\rput[tr](5.2,7.8) {\textcolor{black}{ $p\left(\threshold^0\right)$}}
\rput[tr](6.6,6.2) {\textcolor{black}{ $p\left(\threshold^1\right)$}}
\rput[bl](10.2,10.2) {\textcolor{black}{ $\threshold^1$}}
\rput[tr](7,3) {\textcolor{black}{ $p\left(\threshold^2\right)$}}
\rput[l](12.2,8.2) {\textcolor{black}{ $\threshold^2$}}
\pscircle[linewidth=1pt,fillstyle=solid,fillcolor=lightred](5,8){2pt}
    \psdot[dotsize=4pt](8,11)
    \psline[linewidth=1pt, linestyle=dashed,arrows=>,linecolor=red](5.2,8.2)(7.8,10.8)
\pscircle[linewidth=1pt,fillstyle=solid,fillcolor=lightred](6.55,6.55){2pt}
    \psdot[dotsize=4pt](10,10)
    \psline[linewidth=1pt, linestyle=dashed,arrows=>,linecolor=red](6.6,6.6)(9.8,9.8)
 \pscircle[linewidth=1pt,fillstyle=solid,fillcolor=lightred](7,3){2pt}
    \psdot[dotsize=4pt](12,8)
    \psline[linewidth=1pt, linestyle=dashed,arrows=>,linecolor=red](7.2,3.2)(11.8,7.8)
%
%
%% LABELS - TEXT
\rput(-0.9,12.1) {$\threshold_2$}
\rput(16,-0.7) {$\threshold_1$}
\end{pspicture}
 \end{center}
\caption{Sketch of Theorem \ref{thm:weak_front} for a problem with $m=2$ on the construction of weak Pareto maxima from unsustainable thresholds $\threshold^0$, $\threshold^1$ and $\threshold^2$. The dashed black line indicates the weak Pareto front of the example. }\label{fig:wP}
\end{figure}
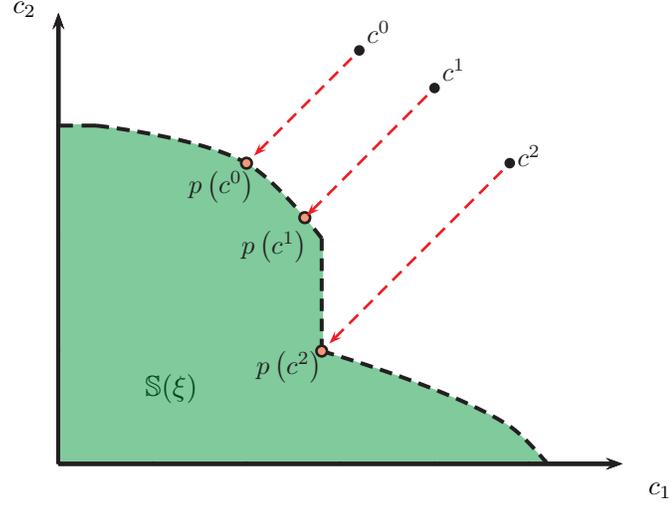

\begin{theorem}\label{thm:weak_front}
Let $\threshold^*,\bar \threshold \in\RR^m$. We then have the following:
\begin{enumerate}
\item\label{thm:weak_front.1}  $\threshold^*$ is a weak Pareto maximum of $\TT(\ic)$ if and only if $\aux{\ic}{\threshold^*} = 0$. 
\item\label{thm:weak_front.3} If $\aux{\ic}{\bar \threshold} < 0$, then $$p(\bar\threshold)\defegal \bar\threshold + \aux{\ic}{\bar\threshold} \bfone$$ 
belongs to the weak Pareto front of $\TT(\ic)$, where $\bfone=(1,1,\ldots,1) \in \RR^m$.
\end{enumerate}
\end{theorem}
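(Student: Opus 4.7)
The plan is to leverage the translation identity $\aux{\ic}{\threshold + \alpha\bfone}=\aux{\ic}{\threshold}-\alpha$ for any $\alpha\in\RR$, which follows immediately from the definitions of $\Phi_k^\threshold$ and $\Theta^\threshold$: one sees that $\Phi_k^{\threshold+\alpha\bfone}=\Phi_k^\threshold-\alpha$ and $\Theta^{\threshold+\alpha\bfone}=\Theta^\threshold-\alpha$ (since the subtraction commutes with the $\bigwedge_i$), hence the integrand in the definition of $\aux{\ic}{\cdot}$ is shifted uniformly by $-\alpha$, and both $\sup_\u$ and $\inf_\w$ carry the shift outside. Combined with \eqref{eq:equiv}, this is essentially all the machinery that is needed.

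\emph{Part \eqref{thm:weak_front.1}.} For the forward implication, assume $\threshold^*$ is a weak Pareto maximum of $\TT(\ic)$. In particular $\threshold^*\in\TT(\ic)$, so by \eqref{eq:equiv} we have $\aux{\ic}{\threshold^*}\geq 0$. Suppose by contradiction that $\alpha:=\aux{\ic}{\threshold^*}>0$. Then by the translation identity, $\aux{\ic}{\threshold^*+\alpha\bfone}=0\geq 0$, so by \eqref{eq:equiv} we get $\threshold^*+\alpha\bfone\in\TT(\ic)$. But $\threshold^*+\alpha\bfone>\threshold^*$ (component-wise, strictly), which strongly dominates $\threshold^*$, contradicting that $\threshold^*$ belongs to the weak Pareto front. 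Conversely, suppose $\aux{\ic}{\threshold^*}=0$. Then $\threshold^*\in\TT(\ic)$ by \eqref{eq:equiv}. If $\threshold^*$ were not a weak Pareto maximum, there would exist $\threshold\in\TT(\ic)$ with $\threshold>\threshold^*$ (strict in each coordinate). Set $\delta:=\min_i(\threshold_i-\threshold^*_i)>0$. Using the translation identity again, $\aux{\ic}{\threshold^*}=\aux{\ic}{\threshold-(\threshold-\threshold^*)}$; since $\threshold-\threshold^*\geq \delta\bfone$ component-wise, monotonicity of $\aux{\ic}{\cdot}$ with respect to the threshold (which is clear from the fact that raising $\threshold$ only decreases $\Phi_k^\threshold$ and $\Theta^\threshold$) yields $\aux{\ic}{\threshold^*}\geq \aux{\ic}{\threshold-\delta\bfone}=\aux{\ic}{\threshold}+\delta\geq \delta>0$, contradicting $\aux{\ic}{\threshold^*}=0$.

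\emph{Part \eqref{thm:weak_front.3}.} Let $\alpha:=\aux{\ic}{\bar\threshold}<0$, so $p(\bar\threshold)=\bar\threshold+\alpha\bfone$. By the translation identity,
\begin{equation*}
\aux{\ic}{p(\bar\threshold)}=\aux{\ic}{\bar\threshold+\alpha\bfone}=\aux{\ic}{\bar\threshold}-\alpha=0.
\end{equation*}
Applying part \eqref{thm:weak_front.1} to $\threshold^*=p(\bar\threshold)$ concludes that $p(\bar\threshold)$ lies in the weak Pareto front of $\TT(\ic)$.

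The only step that requires some care is the verification that the translation identity is valid under both the $\sup_\u\inf_\w$ structure and the nested minima over $k$ and $i$; once this is in place, everything reduces to combining \eqref{eq:equiv} with the fact that subtracting a positive constant from $\alpha\bfone$ shifts the value function by exactly that constant. I do not foresee any genuine obstacle beyond keeping track of signs in the translation formula.
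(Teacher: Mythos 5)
Your proof is correct and follows essentially the same route as the paper: both arguments rest on the equivalence \eqref{eq:equiv} together with the fact that shifting the threshold by $\alpha\bfone$ shifts $\aux{\ic}{\cdot}$ by $-\alpha$. The paper derives the required inequalities inline via explicit admissible/optimal controls and the auxiliary function $\R{}{\threshold}$, whereas you package the same computation as a translation identity plus componentwise monotonicity of $\aux{\ic}{\cdot}$ --- a slightly tidier but not substantively different organization.
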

\begin{proof}
Let us set for any $\threshold\in\RR^m$
\begin{equation}\label{exR}
 \R{}{\threshold}(\x,\u)\defegal \bigwedge_{i=1}^m\left[\min\left\{\bigwedge_{k=0}^{N}\constraints^k_i(\state_k,\control_k),\fcost_i(\state_{N+1})\right\}-\threshold_i\right],\quad\forall \x\in\XX,\ \u\in\UU.
  \end{equation} 
  In particular, it follows that
  $$\aux{\ic}{\threshold} =  \sup_{\u\in\UU}\inf_{\w\in\WW} \R{}{\threshold}(\x_{\ic}^\w(\u),\u).$$
  
\begin{enumerate}
\item Let us assume first that $\aux{\ic}{\threshold^*}=0$; then by \eqref{eq:equiv}, we obtain $\threshold^* \in \TT(\ic)$. To see that $\threshold^*$ is a weak Pareto maximum; suppose for  the sake of contradiction that there exists $\threshold \in \TT(\ic)$ with $ \threshold > \threshold^*$. We define
$$\delta \defegal \bigwedge_{i=1}^{m}\{\threshold_i-  \threshold^*_i\} > 0.$$
Since $\threshold \in \TT(\ic)$, there exists $\u=(\control_k)_{k=0}^{N}\in \UU$ such that for any scenario $\w\in\WW$ we have
$$\constraints^k_i(\state_k,\control_k)\geq \threshold_i,\qquad\forall i\in\titf{1}{m},\ k\in\titf{0}{N},$$
and 
$$\fcost_i(\state_{N+1})\geq \threshold_i,\qquad\forall i\in\titf{1}{m}$$
where $\x_{\ic}^\w(\u)=(\state_k)_{k=0}^{N+1}$. This implies that
$$0=\aux{\ic}{\threshold^*}\geq \inf_{\w\in\WW} \R{}{ \threshold^*}(\x_{\ic}^\w(\u),\u)\geq\bigwedge_{i=1}^m \threshold_i-\threshold^*_i =  \delta  ,$$
which is a contradiction. Therefore, $\threshold^*$ is a weak Pareto maximum of $\TT(\ic)$.

On the other hand, assume now that $\threshold^*$ is a weak Pareto maximum of $\TT(\ic)$. In particular, $\threshold^*\in\TT(\ic)$, so \eqref{eq:equiv} yields $\aux{\ic}{\threshold^*}\geq 0$. Let $\u\in\UU$ be an optimal control for $ \aux{\ic}{\threshold^*}$; that is, $\displaystyle\inf_{\w\in\WW}\R{}{\threshold^*}(\x_{\ic}^\w(\u),\u)=\aux{\ic}{\threshold^*}$. Suppose by contradiction that $\displaystyle\inf_{\w\in\WW}\R{}{\threshold^*}(\x_{\ic}^\w(\u),\u)>0$ and define 
$$\threshold\defegal\threshold^*+\frac{1}{2}\displaystyle\inf_{\w\in\WW}\R{}{\threshold^*}(\x_{\ic}^\w(\u),\u)\bfone> \threshold^*.$$
Notice that
$$\aux{\ic}{\threshold}  \geq \inf_{\w\in\WW}\R{}{\threshold}(\x_{\ic}^\w(\u),\u)=\inf_{\w\in\WW}\R{}{\threshold^*}(\x_{\ic}^\w(\u),\u)-\frac{1}{2}\inf_{\w\in\WW}\R{}{\threshold^*}(\x_{\ic}^\w(\u),\u).$$
Then, we can deduce that 
$$\aux{\ic}{\threshold}  \geq \frac{1}{2}\inf_{\w\in\WW}\R{}{\threshold^*}(\x_{\ic}^\w(\u),\u)\geq 0.$$
In particular, we must have that $\threshold\in\TT(\ic)$. Therefore, we have found some robust sustainable threshold $c$ for which $c>c^*$, which contradicts the fact that $c^*$ is assumed to be a weak Pareto maximum. 
\item It is straightforward to see that $\aux{\ic}{p(\bar \threshold)}=\aux{\ic}{\bar \threshold}-\aux{\ic}{\bar \threshold}=0$. Thus, in light of the first statement of Theorem \ref{thm:weak_front}, the conclusion follows.\end{enumerate}
\end{proof}

\begin{remark}
\pg{Notice that the first part of Theorem \ref{thm:weak_front} provides a characterization of the weak Pareto maxima of $\TT(\ic)$. In addition, one can see that 
if }$ \aux{\ic}{\threshold}>0$, then $\threshold$ belongs to the interior of $\TT(\ic)$. This comes directly from the proof of Theorem \ref{thm:weak_front} and the fact that $\threshold+\frac{1}{2}\aux{\ic}{\threshold}\bfone \in\TT(\ic)$ implies
$$\threshold+\frac{1}{2}\aux{\ic}{\threshold}[-1,1]^m \subset\TT(\ic).$$
\end{remark}

 \subsection{Dynamic programming principle}
 
 To compute the optimal value $\aux{\ic}{\threshold}$, we use the dynamic programming principle. This method leads to an implementable way to compute the optimal value $\aux{\ic}{\threshold}$ and, consequently, to a practical way to compute the set of robust sustainable thresholds and its weak Pareto front. 
 
For the sake of exposition, for any $n\in\titf{0}{N}$, we write
$$V^\threshold_n(\ic)\defegal\max_{\u\in\UU}\inf_{\w\in\WW}\left\{ \min\left\{\bigwedge_{k=n}^{N}\Phi_k^\threshold(\state_k,\control_k),\Theta^\threshold(\state_{N+1})\right\}~\middle|~\ \begin{matrix}\state_{k+1}=\dynamics_k(\state_k,\control_k,\scenario_k)\\ k\in\titf{n}{N},\ \state_{n}=\ic\end{matrix}%\x_{\ic}^\w(\u)=(\state_k)_{k=0}^{N+1}
\right\}$$	
and we set $V^\threshold_{N+1}(\ic)=\Theta^\threshold(\ic)$. Notice that, for any $\ic\in\X$, we have $V^\threshold_0(\ic)=\aux{\ic}{\threshold}
$.
 To obtain $\aux{\ic}{\threshold}$, we use the dynamic programming principle for computing $V^\threshold_0(\ic)$ from the sequence of value functions $V^\threshold_{1}(\cdot),\ldots,V^\threshold_{N+1}(\cdot)$. 
 
\begin{remark}\label{rem:usc_vf}
Notice that the mapping $\ic\mapsto V^\threshold_n(\ic)$ is upper semicontinuous for any $n\in\titf{0}{N+1}$. Indeed, the case $n=N+1$ is obvious since $\Theta^\threshold$ is clearly upper semicontinuous.  If $n\in\titf{0}{N}$, then for any sequence $\{\ic^j\}_{j\in\NN}$ that converges to $\bar\ic$, there is a sequence of optimal controls $\{(\control^j_k)_{k=0}^N\}_{j\in\NN}\subseteq\UU$ such that 
\begin{equation}\label{eq:usc_aux}
V^\threshold_n(\ic^j)=\inf_{\w\in\WW}\left\{ \min\left\{\bigwedge_{k=n}^{N}\Phi_k^\threshold(\state_k,\control^j_k),\Theta^\threshold(\state_{N+1})\right\}~\middle|~\ \begin{matrix}\state_{k+1}=\dynamics_k(\state_k,\control^j_k,\scenario_k)\\ k\in\titf{n}{N},\ \state_{n}=\ic^j\end{matrix}%\x_{\ic}^\w(\u)=(\state_k)_{k=0}^{N+1}
\right\}
\end{equation}
By compactness of $\UU$, we can assume that  $\{(\control^j_k)_{k=0}^N\}_{j\in\NN}$ converges to some  $(\bar\control_k)_{k=0}^N$. Notice as well that the mapping
$$(\ic,\u)\mapsto\inf_{\w\in\WW}\left\{ \min\left\{\bigwedge_{k=n}^{N}\Phi_k^\threshold(\state_k,\control_k),\Theta^\threshold(\state_{N+1})\right\}~\middle|~\ \begin{matrix}\state_{k+1}=\dynamics_k(\state_k,\control_k,\scenario_k)\\ k\in\titf{n}{N},\ \state_{n}=\ic\end{matrix}%\x_{\ic}^\w(\u)=(\state_k)_{k=0}^{N+1}
\right\}$$
is upper semicontinuous, and so, by taking limsup in \eqref{eq:usc_aux} we get
$$\limsup_{j\to+\infty}V^\threshold_N(\ic^j)\leq \inf_{\w\in\WW}\left\{ \min\left\{\bigwedge_{k=n}^{N}\Phi_k^\threshold(\state_k,\bar\control_k),\Theta^\threshold(\state_{N+1})\right\}~\middle|~\ \begin{matrix}\state_{k+1}=\dynamics_k(\state_k,\bar\control_k,\scenario_k)\\ k\in\titf{n}{N},\ \state_{n}=\bar\ic\end{matrix}%\x_{\ic}^\w(\u)=(\state_k)_{k=0}^{N+1}
\right\}.$$ Whence, using the definitions of the value funcion $V^\threshold_n(\bar\ic)$ we conclude.
\end{remark}
 
The dynamic programming principle for maximin problems such as the one that determines the value functions $(V^\threshold_n)_{n=0}^{N+1}$ is a well-known fact; see for example \cite{DLD}. We provide its proof for the sake of completeness.  In this setting, the dynamic programming principle reads as follows.
\begin{proposition}\label{prop:dpp} 
For any $n\in\titf{0}{N}$, $\threshold\in\RR^m$ and $\ic\in\X$, we have	
\begin{equation}\label{eq:ddp}
V^\threshold_n(\ic)=\max_{u\in\U}\min\left\{\inf_{\scenario\in\Omega_n}V^\threshold_{n+1}\left(\dynamics_n(\ic,\control,\scenario)\right),\Phi_n^\threshold(\ic,\control)\right\}.	
\end{equation}
	
\end{proposition}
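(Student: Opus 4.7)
Denote by $\tilde V(\ic)$ the right-hand side of \eqref{eq:ddp}. The plan is to prove the two inequalities separately, following the standard dynamic programming argument for maximin problems cited in \cite{DLD}.

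For the inequality $V^\threshold_n(\ic)\le \tilde V(\ic)$, the idea is to extract from an optimal open-loop control its stage-$n$ component and then recognize the tail as a feasible competitor in the remaining problem. Concretely, by compactness of $\UU$ and the upper-semicontinuity argument of Remark \ref{rem:usc_vf}, I would fix an optimal $\u^*=(\control_k^*)_{k=0}^N$ attaining $V^\threshold_n(\ic)$. Since $\state_n=\ic$ is $\w$-independent and the cost is a minimum over stage terms, every $\w\in\WW$ simultaneously forces $\Phi_n^\threshold(\ic,\control_n^*)\ge V^\threshold_n(\ic)$ and $\min\{\bigwedge_{k=n+1}^N\Phi_k^\threshold(\state_k^*,\control_k^*),\Theta^\threshold(\state_{N+1}^*)\}\ge V^\threshold_n(\ic)$. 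For any fixed $\scenario_n\in\Omega_n$, the tail $(\control_{n+1}^*,\ldots,\control_N^*)$ is feasible in the maximin problem defining $V^\threshold_{n+1}(\dynamics_n(\ic,\control_n^*,\scenario_n))$, so by the very definition of $V^\threshold_{n+1}$ this value is at least $V^\threshold_n(\ic)$. Taking the infimum over $\scenario_n$, combining via the outer minimum with $\Phi_n^\threshold(\ic,\control_n^*)$, and finally the supremum over $\control_n^*$ yields $\tilde V(\ic)\ge V^\threshold_n(\ic)$.

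For the reverse inequality $V^\threshold_n(\ic)\ge \tilde V(\ic)$, I would proceed dually. Upper-semicontinuity from Remark \ref{rem:usc_vf} together with continuity of $\dynamics_n$ and compactness of $\U$ produces $\control_n^*\in\U$ attaining $\tilde V(\ic)$, so that $\Phi_n^\threshold(\ic,\control_n^*)\ge \tilde V(\ic)$ and $V^\threshold_{n+1}(\dynamics_n(\ic,\control_n^*,\scenario))\ge \tilde V(\ic)$ for every $\scenario\in\Omega_n$. For each such $\scenario$, a nearly optimal tail $(\control_{n+1}^\scenario,\ldots,\control_N^\scenario)\in\U^{N-n}$ realizes the corresponding value. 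The remaining step is to produce a single scenario-independent tail $(\bar\control_{n+1},\ldots,\bar\control_N)$ so that prefixing $\control_n^*$ yields a feasible candidate in the maximin defining $V^\threshold_n(\ic)$ with value at least $\tilde V(\ic)-\varepsilon$; letting $\varepsilon\to 0$ then concludes.

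The main obstacle is precisely this scenario-independent extraction in the $(\ge)$ direction, since a priori the optimal tail depends on $\scenario_n$. I plan to handle it via compactness and semicontinuity: pick a minimizing sequence $\{\scenario^j\}\subset\Omega_n$ for $\inf_{\scenario}V^\threshold_{n+1}(\dynamics_n(\ic,\control_n^*,\scenario))$ together with nearly optimal tails $\u^j\in\UU$, use the compactness of $\UU$ from assumption \ref{hyp:4} to extract a convergent subsequence with limit $\bar\u$, and exploit the upper-semicontinuity of $\u\mapsto\inf_\w \gcost{}(\x^\w_\ic(\u),\u)$ together with the minimum-cost structure of the problem to conclude that $\bar\u$ realizes the required uniform lower bound across all scenarios in $\Omega_n$.
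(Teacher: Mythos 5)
Your first inequality, $V^\threshold_n(\ic)\le \max_{\control\in\U}\min\{\inf_{\scenario\in\Omega_n}V^\threshold_{n+1}(\dynamics_n(\ic,\control,\scenario)),\Phi_n^\threshold(\ic,\control)\}$, is correct and is essentially the paper's argument: fix an optimal $\u^*$, write $\inf_{\w\in\WW}$ as $\inf_{\scenario_n\in\Omega_n}$ followed by the infimum over the tail scenarios, and bound the tail expression by the definition of $V^\threshold_{n+1}$ (no induction is even needed for this half). The problem lies entirely in the reverse inequality, exactly where you flag the ``main obstacle.''

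Your proposed resolution of that obstacle does not work. Taking a minimizing sequence $\scenario^j$ and tails $\u^j$ that are near-optimal for the problem started at $\dynamics_n(\ic,\control_n^*,\scenario^j)$, and passing to a limit $\bar\u$, the upper semicontinuity of $\u\mapsto\inf_{\w}\gcost{}(\x^\w_{\ic'}(\u),\u)$ only gives you a lower bound on the performance of $\bar\u$ from the \emph{limiting} initial state; it says nothing about the quantity you actually need, namely $\inf_{\scenario\in\Omega_n}$ of the value of the \emph{single} control $\bar\u$ started from $\dynamics_n(\ic,\control_n^*,\scenario)$ --- the controls $\u^j$ were never uniformly good over $\Omega_n$ to begin with (and $\Omega_n$ is not assumed compact, so the scenarios need not converge either). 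More fundamentally, the scenario-independent tail you are trying to extract need not exist: the optimal continuation must react to $\state_{n+1}=\dynamics_n(\ic,\control_n^*,\scenario_n)$, which depends on $\scenario_n$. A minimal illustration with $N=1$, $m=1$, $\threshold=0$: take $\U=\Omega_0=\{0,1\}$, $\dynamics_0(\state,\control,\scenario)=\scenario$, $\constraints^0\equiv\fcost\equiv 1$, and $\constraints^1(\state,\control)=1$ if $\control=\state$ and $=0$ otherwise (all standing assumptions hold). Then $V^\threshold_1\equiv 1$ on $\{0,1\}$, so the right-hand side of \eqref{eq:ddp} at $n=0$ equals $1$, yet no single open-loop $\control_1$ can match both possible values of $\state_1$, so the supremum over open-loop controls of the robust cost is $0$. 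This shows that no compactness/semicontinuity extraction of a single limit control can close the argument.

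The paper handles this step differently: it synthesizes from the recursion an optimal \emph{feedback} control $\feedback_k:\X\to\U$ achieving the maximum in the right-hand side at every state, and then unrolls $\state_{k+1}=\dynamics_k(\state_k,\feedback_k(\state_k),\scenario_k)$ along an arbitrary scenario, so that the tail control automatically adapts to the realized state; the chain of inequalities $W^\threshold_k(\state_k)\le\min\{W^\threshold_{k+1}(\state_{k+1}),\Phi_k^\threshold(\state_k,\feedback_k(\state_k))\}$ then yields the lower bound. If you want to complete your proof you must argue at this point with closed-loop strategies of this type (and, as the example above indicates, one must correspondingly interpret the value $V^\threshold_n$ so that such feedback policies are admissible competitors, or else verify that open-loop and closed-loop robust values coincide in the setting at hand); a single limit open-loop tail will not do.
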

\begin{proof}
For any  $n\in\titf{0}{N}$, let us defined recursively the function
$$W^\threshold_n(\ic)\defegal \sup_{u\in\U}\min\left\{\inf_{\scenario\in\Omega_n}V^\threshold_{n+1}\left(\dynamics_n(\ic,\control,\scenario)\right),\Phi_n^\threshold(\ic,\control)\right\}.$$ 	
The value  $W^\threshold_n(\ic)$ agrees with the right-hand side of \eqref{eq:ddp} with the supremum instead of the maximum. Let us check that this maximum is attained. Since $V^\threshold_{n+1}$ and $\Phi_n^\threshold(\ic,\cdot)$ are upper semicontinuous and $\dynamics_n(\ic,\cdot,\scenario)$ is continuous for any $n\in\titf{0}{N}$, we get that the functional to be maximized in the definition of $W^\threshold_n(\ic)$ is upper semicontinuous. Thus, since $\U$ is compact, this maximum is attained.

%Therefore, using the fact that $\Phi_k^\threshold(\ic,\cdot)$ and $V^\threshold_{k+1}$ is upper semicontinuous for every $k\in\titf{0}{N}$, it is not difficult to see by (backward) induction that for any $n\in\titf{0}{N}$, the mapping $$\control\mapsto\min\left\{\inf_{\scenario\in\Omega_n}V^\threshold_{n+1}\left(\dynamics_n(\ic,\control,\scenario)\right),\Phi_n^\threshold(\ic,\control)\right\}$$ is upper semicontinuous and consequently, by compactness of the control space $\U$, supremum in $W_n$ is always attained for any $n\in\titf{0}{N}$.

Let us prove by (backward) induction that $V^\threshold_n(\ic)=W^\threshold_n(\ic)$ for any $n\in\titf{0}{N}$.

In the sequel, unless otherwise stated, we use the notation  $\u=(\control_k)_{k=0}^{N}\in\UU$ and  $\w=(\scenario_k)_{k=0}^{N}\in\WW$. Let us first check that the dynamic programming principle holds for the case $n=N$. By definition we have
\begin{equation*}
\begin{aligned}
 V^\threshold_N(\ic)&\defegal\max_{\u\in\UU}\inf_{\w\in\WW}\left\{ \min\left\{\Phi_N^\threshold(\ic,\control_N),\Theta^\threshold(\dynamics_N(\ic,\control_N,\scenario_N))\right\}
\right\},\\
&=\max_{\control\in\U}\inf_{\scenario\in\Omega_N}\left\{ \min\left\{\Phi_N^\threshold(\ic,\control),\Theta^\threshold(\dynamics_N(\ic,\control,\scenario))\right\}\right\}\\
&=\max_{\control\in\U} \min\left\{\Phi_N^\threshold(\ic,\control),\inf_{\scenario\in\Omega_N}\Theta^\threshold(\dynamics_N(\ic,\control,\scenario))\right\}	
\end{aligned}
\end{equation*}
Since $V^\threshold_{N+1}\equiv\Theta^\threshold$, we get $V^\threshold_N(\ic)=W^\threshold_N(\ic)$. 

Let $n\in\titf{0}{N-1}$ and assume that $V^\threshold_k(\state)=W^\threshold_k(\state)$ for any $k\in\titf{n+1}{N}$ and any $\state\in\X$. Let us verify that $V^\threshold_n(\ic)=W^\threshold_n(\ic)$

%Similarly, as in \eqref{exR}, we set $$ \R{n}{\threshold}(\x,\u)\defegal \min\left\{\bigwedge_{k=n}^{N}\Phi_k^\threshold(\state_k,\control_k),\Theta^\threshold(\state_{N+1})\right\},\quad\forall \x\in\XX,\ \u\in\UU,$$ as the functional to be maximized in the definition of $V^\threshold_{n}$. It is then clear that $\R{n+1}{\threshold}(\x,\u)$ does not depend on $\control_n$ and also that $$ \R{n}{\threshold}(\x,\u) =\max\left\{\R{n+1}{\threshold}(\x,\u),\Phi_n^\threshold(\state_{n},\control_n)\right\}.$$

Let $\bar\u=(\bar\control_k)_{k=0}^{N}\in\UU$ be an optimal control for $ V^\threshold_n(\ic)$. Notice then that
\begin{eqnarray*}
 V^\threshold_n(\ic)=\inf_{\w\in\WW}\left\{ \min\left\{\Phi_n^\threshold(\ic,\bar\control_n),\bigwedge_{k=n+1}^{N}\Phi_k^\threshold(\state_k,\bar\control_k),\Theta^\threshold(\state_{N+1})\right\}\middle|\  \begin{matrix}\state_{k+1}=\dynamics_k(\state_k,\bar\control_k,\scenario_k)\\  k\in\titf{n+1}{N},\\\state_{n+1}=\dynamics_n(\ic,\bar\control_n,\scenario_n)\end{matrix}
\right\}\\
	=\min\left\{\Phi_n^\threshold(\ic,\bar\control_n),\inf_{\w\in\WW}\left\{\min\left\{ \bigwedge_{k=n+1}^{N}\Phi_k^\threshold(\state_k,\bar\control_k),\Theta^\threshold(\state_{N+1})\right\}\middle|\  \begin{matrix}\state_{k+1}=\dynamics_k(\state_k,\bar\control_k,\scenario_k)\\  k\in\titf{n+1}{N},\\\state_{n+1}=\dynamics_n(\ic,\bar\control_n,\scenario_n)\end{matrix}
\right\}\right\}
%\\\leq\min\left\{\Phi_n^\threshold(\ic,\bar\control_n),\max_{\u\in\UU}\inf_{\w\in\WW}\left\{ \min\left\{ \bigwedge_{k=n+1}^{N}\Phi_k^\threshold(\state_k,\control_k),\Theta^\threshold(\state_{N+1})\right\}\middle|\  \begin{matrix}\state_{k+1}=\dynamics_k(\state_k,\control_k,\scenario_k)\\  k\in\titf{n+1}{N}\\\state_{n+1}=\dynamics_n(\ic,\bar\control_n,\scenario_n)\end{matrix}
%\right\}\right\}
\end{eqnarray*}
%\begin{eqnarray*}
% V^\threshold_n(\ic)\defegal\max_{(\control,\u)\in\U\times\UU}\inf_{\w\in\WW}\left\{ \min\left\{\Phi_n^\threshold(\ic,\control),\bigwedge_{k=n+1}^{N}\Phi_k^\threshold(\state_k,\control_k),\Theta^\threshold(\state_{N+1})\right\}\middle|\  \begin{matrix}\state_{k+1}=\dynamics_k(\state_k,\control_k,\scenario_k)\\  k\in\titf{n+1}{N},\\\state_{n+1}=\dynamics_n(\ic,\control,\scenario_n)\end{matrix}
%\right\}\\
%	=\max_{(\control,\u)\in\U\times\UU}\min\left\{\Phi_n^\threshold(\ic,\control),\inf_{\w\in\WW}\left\{\min\left\{ \bigwedge_{k=n+1}^{N}\Phi_k^\threshold(\state_k,\control_k),\Theta^\threshold(\state_{N+1})\right\}\middle|\  \begin{matrix}\state_{k+1}=\dynamics_k(\state_k,\control_k,\scenario_k)\\  k\in\titf{n+1}{N},\\\state_{n+1}=\dynamics_n(\ic,\control,\scenario_n)\end{matrix}
%\right\}\right\}\\
%\leq\max_{\control\in\U}\min\left\{\Phi_n^\threshold(\ic,\control),\max_{\u\in\UU}\inf_{\w\in\WW}\left\{ \min\left\{ \bigwedge_{k=n+1}^{N}\Phi_k^\threshold(\state_k,\control_k),\Theta^\threshold(\state_{N+1})\right\}\middle|\  \begin{matrix}\state_{k+1}=\dynamics_k(\state_k,\control_k,\scenario_k)\\  k\in\titf{n+1}{N}\\\state_{n+1}=\dynamics_n(\ic,\control,\scenario_n)\end{matrix}
%\right\}\right\}
%\end{eqnarray*}
Furthermore, it also holds that
\begin{equation*}
\begin{aligned}
&\inf_{\w\in\WW}\left\{\min\left\{ \bigwedge_{k=n+1}^{N}\Phi_k^\threshold(\state_k,\bar\control_k),\Theta^\threshold(\state_{N+1})\right\}\middle|\  \begin{matrix}\state_{k+1}=\dynamics_k(\state_k,\bar\control_k,\scenario_k)\\  k\in\titf{n+1}{N},\\\state_{n+1}=\dynamics_n(\ic,\bar\control_n,\scenario_n)\end{matrix}
\right\}\\
%&\leq\max_{\u\in\UU}\inf_{\w\in\WW}\left\{ \min\left\{ \bigwedge_{k=n+1}^{N}\Phi_k^\threshold(\state_k,\control_k),\Theta^\threshold(\state_{N+1})\right\}\middle|\  \begin{matrix}\state_{k+1}=\dynamics_k(\state_k,\control_k,\scenario_k)\\  k\in\titf{n+1}{N},\\\state_{n+1}=\dynamics_n(\ic,\control,\scenario_n)\end{matrix}\right\}\\
&=\inf_{\scenario\in\Omega_n}\inf_{\w\in\WW}\left\{ \min\left\{ \bigwedge_{k=n+1}^{N}\Phi_k^\threshold(\state_k,\bar\control_k),\Theta^\threshold(\state_{N+1})\right\}\middle|\  \begin{matrix}\state_{k+1}=\dynamics_k(\state_k,\bar\control_k,\scenario_k)\\  k\in\titf{n+1}{N},\\\state_{n+1}=\dynamics_n(\ic,\bar\control_n,\scenario)\end{matrix}
\right\}\\
&\leq 
	\inf_{\scenario\in\Omega_n}\max_{\u\in\UU}\inf_{\w\in\WW}\left\{ \min\left\{ \bigwedge_{k=n+1}^{N}\Phi_k^\threshold(\state_k,\control_k),\Theta^\threshold(\state_{N+1})\right\}\middle|\  \begin{matrix}\state_{k+1}=\dynamics_k(\state_k,\control_k,\scenario_k)\\  k\in\titf{n+1}{N},\\\state_{n+1}=\dynamics_n(\ic,\bar\control_n,\scenario)\end{matrix}
\right\}\\
&=\inf_{\scenario\in\Omega_n}V^\threshold_{n+1}\left(\dynamics_n(\ic,\bar\control_n,\scenario)\right).\end{aligned}
\end{equation*}

%\begin{equation*}
%\begin{aligned}
%&\inf_{\w\in\WW}\left\{\min\left\{ \bigwedge_{k=n+1}^{N}\Phi_k^\threshold(\state_k,\bar\control_k),\Theta^\threshold(\state_{N+1})\right\}\middle|\  \begin{matrix}\state_{k+1}=\dynamics_k(\state_k,\bar\control_k,\scenario_k)\\  k\in\titf{n+1}{N},\\\state_{n+1}=\dynamics_n(\ic,\bar\control_n,\scenario_n)\end{matrix}
%\right\}\\
%&\leq\max_{\u\in\UU}\inf_{\w\in\WW}\left\{ \min\left\{ \bigwedge_{k=n+1}^{N}\Phi_k^\threshold(\state_k,\control_k),\Theta^\threshold(\state_{N+1})\right\}\middle|\  \begin{matrix}\state_{k+1}=\dynamics_k(\state_k,\control_k,\scenario_k)\\  k\in\titf{n+1}{N},\\\state_{n+1}=\dynamics_n(\ic,\control,\scenario_n)\end{matrix}
%\right\}\\
%&=\max_{\u\in\UU}\inf_{\scenario\in\Omega_n}\inf_{\w\in\WW}\left\{ \min\left\{ \bigwedge_{k=n+1}^{N}\Phi_k^\threshold(\state_k,\control_k),\Theta^\threshold(\state_{N+1})\right\}\middle|\  \begin{matrix}\state_{k+1}=\dynamics_k(\state_k,\control_k,\scenario_k)\\  k\in\titf{n+1}{N},\\\state_{n+1}=\dynamics_n(\ic,\control,\scenario)\end{matrix}
%\right\}\\
%&\leq 
%	\inf_{\scenario\in\Omega_n}\max_{\u\in\UU}\inf_{\w\in\WW}\left\{ \min\left\{ \bigwedge_{k=n+1}^{N}\Phi_k^\threshold(\state_k,\control_k),\Theta^\threshold(\state_{N+1})\right\}\middle|\  \begin{matrix}\state_{k+1}=\dynamics_k(\state_k,\control_k,\scenario_k)\\  k\in\titf{n+1}{N},\\\state_{n+1}=\dynamics_n(\ic,\control,\scenario)\end{matrix}
%\right\}\\
%&=\inf_{\scenario\in\Omega_n}V^\threshold_{n+1}\left(\dynamics_n(\ic,\control,\scenario)\right).\end{aligned}
%\end{equation*}

Whence we get $V^\threshold_n(\ic)\leq W^\threshold_n(\ic)$.

Now, to prove the other inequality, let us point out that, since the maximum is attained in the definition of any of the functions $W^\threshold_k(\cdot)$, there is an optimal feedback control $\feedback_k:\X\to\U$ such that
$$W^\threshold_k(\state):=\min\left\{\inf_{\scenario_k\in\Omega_k}W^\threshold_{k+1}\left(\dynamics_k(\state,\feedback_k(\state),\scenario_k\right),\Phi_k^\threshold(\state,\feedback_k(\state))\right\},\qquad\forall \state\in\X,$$
where we have set $W^\threshold_{N+1}\defegal V^\threshold_{N+1}$. The use of $W^\threshold_{k+1}$ instead of $V^\threshold_{k+1}$ in the preceding equality is a consequence of the induction hypothesis.

 Take $\w\in\WW$ arbitrary and define 
$$\state_{k+1}=\dynamics_k(\state_k,\feedback_k(\state_k),\scenario_k),\qquad\forall k\in\titf{n}{N},\qquad\state_{n}=\ic.$$
It follows then that
$$W^\threshold_k(\state_k)\leq\min\left\{W^\threshold_{k+1}\left(\state_{k+1}\right),\Phi_k^\threshold(\state,\feedback_k(\state_k))\right\},\qquad\forall k\in\titf{n}{N}.$$
Therefore, using this inequality repeatedly we get
$$W^\threshold_n(\ic)=W^\threshold_n(\state_n)\leq \min\left\{\bigwedge_{k=n}^{N}\Phi_k^\threshold(\state_k,\feedback_k(\state_k)),\Theta^\threshold(\state_{N+1})\right\}.$$
Since this is true for any $\w\in\WW$, it yields to
$$W^\threshold_n(\ic)\leq\inf_{\w\in\WW}\left\{ \min\left\{\bigwedge_{k=n}^{N}\Phi_k^\threshold(\state_k,\feedback_k(\state_k)),\Theta^\threshold(\state_{N+1})\right\}~\middle|~\ \begin{matrix}\state_{k+1}=\dynamics_k(\state_k,\feedback_k(\state_k),\scenario_k)\\ k\in\titf{n}{N},\ \state_{n}=\ic\end{matrix}
\right\}$$
%
%$$W^\threshold_n(\ic):=\max_{u\in\U}\min\left\{\inf_{\scenario\in\Omega_n}V^\threshold_{n+1}\left(\dynamics_k(\ic,\control,\scenario\right),\Phi_k^\threshold(\ic,\control)\right\}$$
%
%$$V^\threshold_n(\ic)\defegal\max_{\u\in\UU}\inf_{\w\in\WW}\left\{ \min\left\{\bigwedge_{k=n}^{N}\Phi_k^\threshold(\state_k,\control_k),\Theta^\threshold(\state_{N+1})\right\}~\middle|~\ \begin{matrix}\state_{k+1}=\dynamics_k(\state_k,\control_k,\scenario_k),\\ k\in\titf{n}{N},\ \state_{n}=\ic\end{matrix}
%\right\}.$$	
Finally, by taking supremum over $\u\in\UU$, we get that $W^\threshold_n(\ic)\leq V^\threshold_n(\ic)$. Thus , by induction, the conclusion follows.

\end{proof}

\subsection{A scheme for computing the weak Pareto front}\label{schemewpf}

To summarize, by combining Theorem \ref{thm:weak_front} and Proposition \ref{prop:dpp}, we obtain a practical method (Algorithm \ref{algo2}) to compute the weak Pareto front of the set of robust sustainable thresholds associated with a control system with  constraints.

\new{For implementing this algorithm, it is necessary to define two meshes  $\X_h \subset \X$ and $S_h\subseteq\RR^m$ of size $0<h\ll1$  as  computational domains (state and thresholds). Then for any $n\in\titf{0}{N}$,  the   function $V^\threshold_n(\cdot)$ in Proposition \ref{prop:dpp} has to be computed for every $\ic' \in \X_h$ reachable in $n$  steps from $\ic$, and for all $\threshold \in S_h$, a procedure that  could be too expensive, which is not surprising because the method is based on the dynamic programming principle. Nevertheless, from Theorem \ref{thm:weak_front} and Proposition \ref{prop:dpp}, the method introduced in Algorithm \ref{algo2} will not need a large mesh $S_h$, as it is explained in the example shown in Section \ref{sec:simulations}. }

\begin{algorithm}\caption{Computing the weak Pareto front}\label{algo2}
	\KwIn{$\ic\in\X$, $N\in\NN$, $\dynamics:\titf{0}{N}\times\X\times\U\times \Omega\to\X$, $g:\titf{0}{N}\times\X\times\U\to\RR^m$}
	Let $\X_h \subset \X$ and $S_h\subseteq\RR^m$  be two meshes of size $0<h\ll1$ for  computational domains (state and thresholds).
	
	For $n \in  \titf{0}{N}$ let $\X_h^n \subset \X_h$ be the set of points in $\X_h$ reachable from $\ic$ in $n$ steps.
	
	Let $\TT$ and $\wpareto$ two empty arrays.
	
	\For{ $\threshold_i \in S_h$}{
	
		\For{ $\ic' \in \X_h^N$}{
		Compute $V_N^{\threshold_i}(\ic')=\underset{\control\in\U}\max \; \Phi^{\threshold_i}_N(\ic',\control).$\label{algo:step1}}
		
		}
		
		Set  $n=N-1$.
		
\While {$n\geq 0$}{
	
		\For{ $\threshold_i \in S_h$}{
		\For{ $\ic' \in \X_h^n$}{
		\hspace{-0.2cm}Compute $V_n^{\threshold_i}(\ic')=\underset{u\in\U}\max\min\left\{\underset{\scenario\in \Omega_n}\min V_{n+1}^{\threshold_i}\left(\dynamics_n(\ic',u,\scenario)\right),\Phi^{\threshold_i}_n(\ic',u)\right\}.$\label{algo:step2}}
		
		}
		
		Set $n= n-1$.}
		\For{ $\threshold_i \in S_h$}{Save $\threshold_i+V_0^{\threshold_i}(\ic)\bfone$ in $\wpareto$}

	\Return{$\wpareto$ and $\TT=\wpareto + \RR^m_-$}
	\end{algorithm}

\if{
\begin{algorithm}\caption{Computing the weak Pareto front}\label{algo2}
	\KwIn{$\ic\in\X$, $N\in\NN$, $\dynamics:\titf{0}{N}\times\X\times\U\to\X$, $g:\titf{0}{N}\times\X\times\U\to\RR^m$}
	Let $S_h\subseteq\RR^m$  be a mesh of size $0<h\ll1$ for a computational domain.
	
	Let $\TT_N, \TT_{N-1},\ldots,\TT_0$ and $\wpareto^N, \wpareto^{N-1},\ldots,\wpareto^0$ be empty arrays.
	
	\For{ $\threshold_i \in S_h$}{
	
		Compute $V_N^{\threshold_i}(\ic)=\max_{\control\in\U}\Phi^{\threshold_i}(N,\ic,\control).$\label{algo:step1}
		
		\If{ $V_N^{\threshold_i}(\ic)\geq 0$}{Save $\threshold_i$ in $\TT_N$.}
		\Else{Save $\threshold_i+V_N^{\threshold_i}(\ic)\bfone$ in $\wpareto^N$}}
		
		Set $S_h=\TT_N$ and $n=N-1$.
		
\While {$S_h\neq\emptyset$ and $n\geq 0$}{
	
		\For{ $\threshold_i \in S_h$}{
		
		Compute $V_n^{\threshold_i}(\ic)=\max_{u\in\U}\min\left\{ \min_{\scenario\in\Omega_n} V_{n+1}^{\threshold_i}\left(\dynamics_n(\ic,\control,\scenario)\right),\Phi^{\threshold_i}(n,\ic,\control,\scenario)\right\}.$\label{algo:step2}
		
		\If{ $V_n^{\threshold_i}(\ic)\geq 0$}{Save $\threshold_i$ in $\TT_n$.}
		\Else{Save $\threshold_i+V_n^{\threshold_i}(\ic)\bfone$ in $\wpareto^n$}}
		
		Set $S_h=\TT_n$ and $n\leftarrow n-1$.}

	\Return{$\TT_N, \TT_{N-1},\ldots,\TT_0$ and $\wpareto^N, \wpareto^{N-1},\ldots,\wpareto^0$}
	\end{algorithm}
}\fi

\section{Simulations}\label{sec:simulations}

In this section, we illustrate the computation of the set of robust sustainable thresholds $\TT(\ic)$, for one example based on renewable resource management, inspired by \cite{Clark:1990}. In the example, the stock of a renewable resource in period $k$ is represented by $\state_k \ge 0$, and its dynamics with harvesting (or catch) $\control_k$ are described by
$$\state_{k+1} = \dynamics(\state_k,\control_k, \scenario_k)= f(\state_k,\scenario_k) -\control_k$$
where $f$ stands for the renewable function of the stock, depending on the scenario $\scenario_k \in \Omega_k \equiv \Omega = \{\scenario_a,\scenario_b\}$, for all $k \in \titf{0}{N}$. 

For the above control system, suppose that a social planner has the objective of ensuring both, minimal resource stocks in the nature and minimal harvesting. The first requirement is associated to the sustainability of the resource and the second to economical, social or food security issues. Reformulated from a viability viewpoint, the problem relates to sustaining both stock and harvest through the thresholds $\state^{\lim}$ and $h^{\lim}$ as follows:
\begin{equation}\label{eq:ex1}
\left\{\begin{array}{l}
\state_{k+1}=f(\state_{k},\scenario_k) - \control_k, \; \\
\state_0 =\ic ~\mbox{ given (the current state of the resource)} \\
\state_k \ge \state^{\lim}  
\\
\control_k \ge h^{\lim} . \end{array}\right.
\end{equation}

\if{
The computation of the set $\TT(\ic)$ corresponds to the identification of viable thresholds $\state^{\lim}$  and $h^{\lim}$ with respect to current state $\ic$. 

We will see that the maximal sustainable yield (MSY) level is a tipping point in the determination of the sustainable thresholds. To introduce this concept, we denote by $\sigma(\state)$ the harvest level obtained for the equilibrium biomass level $\state$; that is,
 $$\sigma(\state)=f(\state)-\state.$$ 
 Therefore, the MSY is the catch level at equilibrium for which this quantity is maximized; that is,
$${\msy} = \max_{\state \ge 0} \sigma(\state).$$
}\fi

We shall study this very simple example, because when the dynamics $f(\cdot,\scenario)$ is nondecreasing for  $\scenario \in \Omega$, we can analytically compute the set of { robust sustainable thresholds when the horizon is infinity, and then we are able to compare this analytical expression with the result given by our method for computing  $\TT(\ic)$ (Algorithm \ref{algo2}). A first interesting property we can show in this framework is presented in the following remark.

\begin{remark}\label{rem:equality}
Under the assumption that the function $f(\cdot,\scenario)$ in  \eqref{eq:ex1} is nondecreasing, for  $\scenario \in \Omega$,  one has  
$$\TT(\ic)=\hat \TT(\ic)= \bigcap_{\scenario \in \Omega} \TT^\scenario(\ic)$$
 where 
$\TT^\scenario(\ic)$ is the set of sustainable thresholds for the (deterministic) system 
\begin{equation}\label{eq:ex1det}
\left\{\begin{array}{l}
\state_{k+1}=f(\state_{k},\scenario) - \control_k, \; \\
\state_0 =\ic \\
\state_k \ge \state^{\lim}  
\\
\control_k \ge h^{\lim}  \end{array}\right.
\end{equation}
associated to the constant scenario $\w_\scenario=(\scenario_k)^N_{k=0}$ such that $\scenario_k=\scenario$ for all $k\in\titf{0}{N}$. Thanks to Remark \ref{rem:linkdeterministic}, to prove this claim we only need to show that $\hat \TT(\ic)\subseteq \TT(\ic)$. For this purpose, let us consider a threshold $\threshold=(\state^{\lim},h^{\lim})\in \hat \TT(\ic)$. It is then immediate to verify that  the set of admissible controls $\UU_\ic^{\w_\scenario}(\threshold)$ is nonempty for all $\scenario\in\Omega$. Let $\u^\scenario=({\control}^\scenario_k)^N_{k=0}$ be an element of $\UU_\ic^{\w_\scenario}(\threshold)$, for $\scenario\in\Omega$, and $\bar{\u}=(\bar{\control}_k)^N_{k=0}\in \UU$ be defined by $\bar{\control}_k:=\inf_{\scenario\in\Omega} {\control}^\scenario_k$, for $k\in\titf{0}{N}$. Being $\u^\scenario$ admissible for all $\scenario\in\Omega$, one has
$$
\bar{\control}_k \geq h^{\lim},\qquad\forall k\in\titf{0}{N}.
$$
Furthermore, thanks to the monotonicity of $f(\cdot,\scenario)$ and the definition of $\bar\u$, it can be easily shown that for any $\w=(\scenario_k)^N_{k=0}\in\WW$, the trajectory 
$\x^\w_\ic(\bar\u)=(\state_k)^{N+1}_{k=0}$ satisfies
$$
\state_k\geq \state^{\scenario_k}_k,\qquad\forall k\in\titf{0}{N},
$$
where we denoted by $\x^{\w_\scenario}_\ic(\u^\scenario)=(\state^{\scenario}_k)^{N+1}_{k=0}$ the admissible (deterministic) trajectory associated to $\scenario\in \Omega$. From the last inequality  and the admissibility of the control $\u^\scenario$ we obtain
$$
\state_k \geq \state^{\lim},\qquad\forall k\in\titf{0}{N},
$$
and then we can conclude that $\bar\u\in \UU^\w_\ic(\threshold)$. Thanks to the arbitrariness of $\w\in\WW$ the latter implies that $\bar\u\in \bigcap_{\w\in\WW} \UU^\w_\ic(\threshold)$, that is   $\threshold\in \TT(\ic)$.
\end{remark}

The previous remark links the characterization of the set of robust sustainable thresholds $\TT(\ic)$ to the one of its deterministic versions $\{\TT^\scenario(\ic),\,\scenario\in \Omega\}$ obtained for constant scenarios.
Interestingly, as already pointed out in \cite{GajHer19}, for infinite horizon problems, i.e. if $N=\infty$, the monotonicity of $f$ allows an analytical computation of $\TT^\scenario_\infty$ and then, in virtue of Remark \ref{rem:equality} of $\TT_\infty(\ic)$.
} 

In particular,  let us consider the Beverton-Holt population dynamics
\begin{equation}\label{eq:BH}
f(\state,\scenario)= (1+r(\scenario))\state\left(1+\frac{r(\scenario)}{K(\scenario)}\state\right)^{-1}
\end{equation}
where the intrinsic growth $r(w) \in \{r(\scenario_a),r(\scenario_b)\}$ and carrying capacity $K(\scenario) \in \{K(\scenario_a),K(\scenario_b)\},$ are positive parameters depending on the scenario $\scenario \in \Omega$. 
For this Beverton-Holt growth function \eqref{eq:BH} and a fixed scenario $\scenario$, the maximal sustainable yield (MSY) level is a tipping point in the determination of the sustainable thresholds of the deterministic system \eqref{eq:ex1det}, and it is attained at the biomass level $\state_{\msy}^\scenario$ given by
$$\state_{\msy}^\scenario= \frac{K(\scenario)}{1+\sqrt{1+r(\scenario)}}.$$
When the horizon is infinity ($N=+\infty$), the viability kernel (associated to \eqref{eq:ex1det} for the scenario $\scenario$ fixed) has been calculated analytically in \cite{DLD}, and as it is shown in \cite{DG2018,GajHer19} the (deterministic) set of sustainable thresholds associated to \eqref{eq:ex1det} is given by
\begin{equation}\label{eq:sinftydet}
\TT^\scenario_\infty(\ic) = \{(\state^{\lim},h^{\lim}) ~|~   \state^{\lim} \leq \min\{\state_0,K(\scenario)\}; ~~  h^{\lim} \leq \sigma_\scenario(\state^{\lim}) \},
\end{equation}
where the function $\sigma_\scenario(\cdot)$ is defined by 
 \begin{equation}\label{eq:sigma}
 \sigma_\scenario(\state)=f(\state,\scenario)-\state,
 \end{equation}
 and represents the harvesting (or yield) at equilibrium when the steady state stock is $\state$, for a fixed scenario $\scenario$. 

Since $\TT(\ic)$ {coincides} with $\hat \TT(\ic) = \bigcap_{\scenario \in \Omega} \TT^\scenario(\ic)$  and $\TT^\scenario(\ic)$ approaches to $\TT^\scenario_\infty(\ic)$ when $N \to \infty$, 
the objective of this example is to compare $\TT(\ic)$ -computed by our method for $N$ large enough- with respect to  $\hat \TT_\infty(\ic) = \bigcap_{\scenario \in \Omega} \TT^\scenario_\infty(\ic)$ computed analytically using \eqref{eq:sinftydet}.

\if{
\begin{remark}\label{rem:oposite}
Note that the constraints $\state_k \ge \state^{\lim} $ and $\control_k \ge h^{\lim}$ in \eqref{eq:ex1} are given in the opposite sense of the constraints established in \eqref{eq:constraints} for the general formulation. Instead of reformulating these constraints as $-\state_k \geq -\state^{\lim} $ and $-\control_k \geq -h^{\lim}$ to fit the general formulation, we deal directly with the original constraints; therefore, the definition of $\TT(\ic)$ changes slightly. Instead of $\TT(\ic) + \RR^m_- = \TT(\ic)$, dealing with the original constraints will result in $\TT(\ic) + \RR^m_- = \TT(\ic)$. On the other hand, for the problem studied in this example, we have no interest in the negative thresholds $\state^{\lim} $ and $h^{\lim}$. For this reason, in Figure \ref{fig:example1}, we only depict the sustainable thresholds in the positive \pg{orthant}.
\end{remark}
}\fi

\begin{figure}[!htp]
%\begin{center}
~\hspace{-2.4cm}\includegraphics[scale=.85,clip]{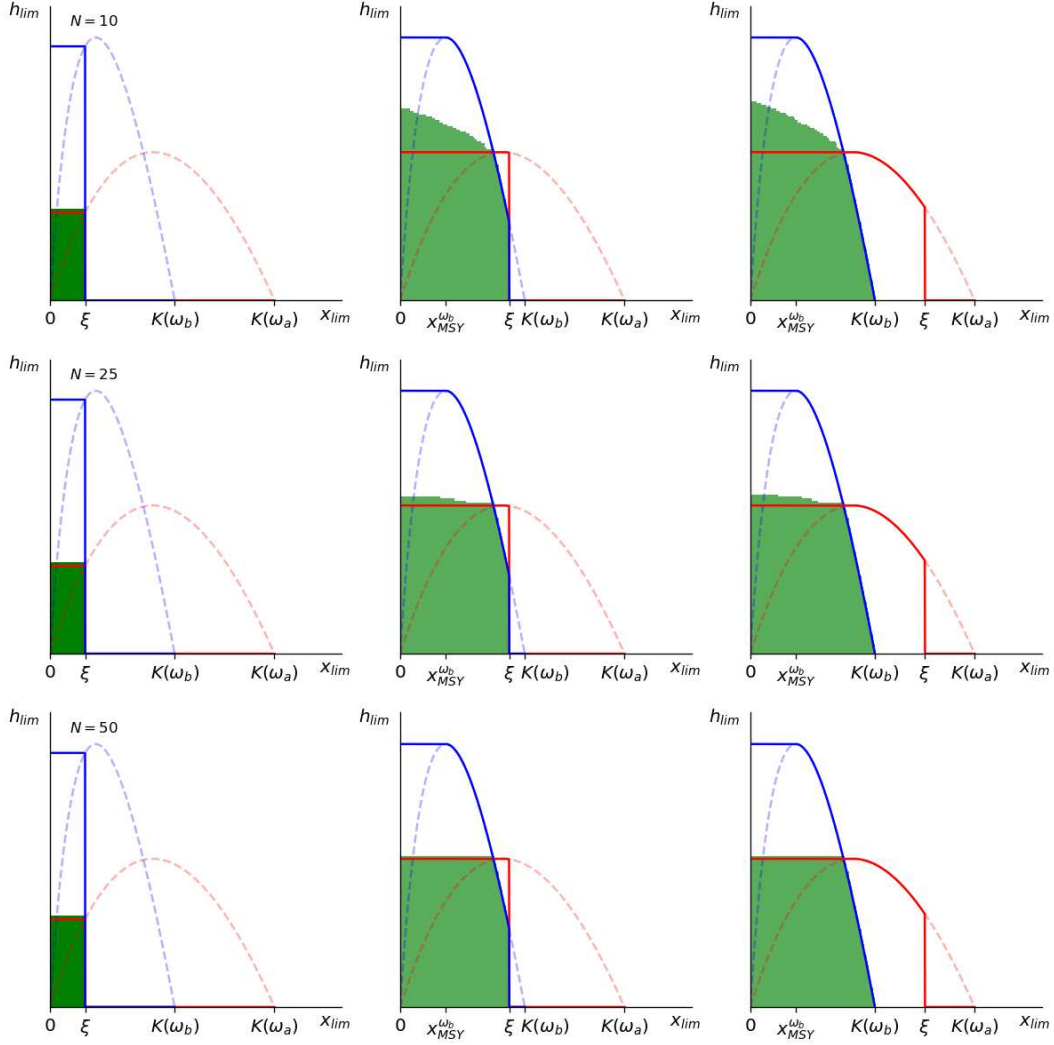}
 \caption{Set of robust sustainable thresholds $ \TT(\ic)$ (green) for different time horizons {$N=10$ (first row), $N=25$ (second row), and $N= 50$ }(third row) and  three initial stocks  $\ic$ (displayed in each column). The red and blue curves in each panel correspond to the weak Pareto fronts of sets $\TT^{\scenario_a}_\infty(\ic)$ and $\TT^{\scenario_b}_\infty(\ic)$  respectively, from where one identifies  the set $\hat \TT_\infty(\ic)=  \TT^{\scenario_a}_\infty(\ic) \cap \TT^{\scenario_b}_\infty(\ic)$. {The dashed curves represent functions $\sigma_{\scenario_a}$ and $\sigma_{\scenario_a}$ defined in \eqref{eq:sigma} used for computing  $\TT^{\scenario_a}_\infty(\ic)$ and $\TT^{\scenario_b}_\infty(\ic)$ (see \eqref{eq:sinftydet})}. The parameters for the resource  dynamics were set to $r(\scenario_a)=0.39$,  $r(\scenario_b)=2$, $K(\scenario_a)=90$, and $K(\scenario_b)=50$.}\label{fig:example1}
%\end{center}
\end{figure}

In Figure \ref{fig:example1} we show the set of robust sustainable thresholds $ \TT(\ic)$ considering different time horizons {$N=10$ (first row), $N=25$ (second row), and $N= 50$ (third row)} for three initial endowments of the resource $\ic$ (displayed in each column).  Also, through \eqref{eq:sinftydet}, we compute analytically the sets of sustainable thresholds associated to the deterministic system \eqref{eq:ex1det} for the constant scenarios $\scenario \in \Omega = \{\scenario_a,\scenario_b\}$ and infinite horizon ($N=+\infty$). We illustrate these sets by depicting the weak Pareto fronts of  $\TT^{\scenario_a}_\infty(\ic)$ (red) and $\TT^{\scenario_b}_\infty(\ic)$ (blue), from where it is easy to identify the set 
$$\hat \TT_\infty(\ic) = \bigcap_{\scenario \in \Omega} \TT_\infty^\scenario(\ic)=  \TT^{\scenario_a}_\infty(\ic) \cap \TT^{\scenario_b}_\infty(\ic) .$$

\if{ 
$\TT_\infty(\ic)$ in the infinite horizon case, computed analytically (first row), and the numerical approximation of $\TT(\ic)$ when $N=20$ (second row) for initial conditions $\ic$ in the following cases: (i) $0< \ic < \state_{\msy}$ (first column); (ii) $\state_{\msy} <  \ic < K$ (second column); and (iii) $K <  \ic$ (third column). }\fi

The procedure for obtaining the set of robust sustainable thresholds $ \TT(\ic)$ was conducted by computing the weak Pareto front $\pareto^\mathcal{W}\left(\TT(\ic)\right)$ and then using the equality (see Remark \ref{rem:charact})
$$ \TT(\ic)= \pareto^\mathcal{W}\left(\TT(\ic)\right)+ \RR^m_- .$$
The Pareto front $\pareto^\mathcal{W}\left(\TT(\ic)\right)$ is computed using the elements and results presented in Section \ref{weakpf}, specifically with the method outlined in Algorithm \ref{algo2}.

In more details, in the positive \pg{orthant} of $\RR^2$ (space of thresholds)  we consider the mesh 
\begin{equation}\label{eq:mesh}
S_d=\{(jd,\bar h^{\lim}) ~|~j=0,1,\ldots,N_d\} \cup \{(\bar \state^{\lim},j d) ~|~j=0,1,\ldots,N_d\},
\end{equation}
with $0<d\ll1$ as the size of the mesh, $N_d \in \NN$, and $\bar \state^{\lim}$, $\bar h^{\lim} > 0$ large enough. For each vector of thresholds $ \threshold = (\state^{\lim},h^{\lim})$ in the mesh $S_d$, we compute $\aux{\ic}{\threshold}$ defined in \eqref{eq:Wc}. Taking   $\bar \state^{\lim}$ and  $\bar h^{\lim}$ sufficiently large ensures that vectors $\threshold$ in the mesh are not in $\TT(\ic)$. Hence, from Theorem \ref{thm:weak_front}, we obtain $\aux{\ic}{\threshold} < 0$ and  we find that $p(\threshold)\defegal \threshold + \aux{\ic}{\threshold} \bfone$ is in the weak Pareto front for all vector of thresholds $ \threshold$ in $S_d$. Thus, we obtain the weak Pareto front of $\TT(\ic)$ and, a fortiori, the entire set $\TT(\ic)$.

Since  
$$\TT(\ic)  = \hat \TT(\ic) = \TT^{\scenario_a}(\ic) \cap \TT^{\scenario_b}(\ic) $$
(see Remark  \ref{rem:equality}) and due to  the set $\hat \TT(\ic) $ approaching $\hat \TT_\infty(\ic)$ when the time horizon  $N$ increases, for $N$ large, as in the third row of Figure \ref{fig:example1}, we should obtain
$$\TT(\ic){\; \overset{N\to\infty}\longrightarrow} \; \hat \TT_\infty(\ic) = \TT^{\scenario_a}_\infty(\ic) \cap \TT^{\scenario_b}_\infty(\ic)$$
{as the numerical tests reported in Figure \ref{fig:example1}  confirm.}

%\begin{figure}[ht]
%\begin{center}
%\hspace{-.655cm}\includegraphics[scale=0.09]{figure_SST.eps}
% \caption{set of robust sustainable thresholds \pg{$\TT_\infty(\ic)$  (first row) and their approximations given by $\TT(\ic)$ (second row) }with $N=20$ for different initial conditions $\ic$: first column: $\ic < x_{\msy}$; second column: $x_{\msy} < \ic< K$; third column: $\ic > K$. The parameters for the stock dynamics are set to $r= 1.75$ and $K=50$.}\label{fig:example1}
%\end{center}
%\end{figure}

\if{
\new{For obtaining Figure \ref{fig:example1}, the mesh $S_h$ in Algorithm \ref{algo2} given by \eqref{eq:mesh}, we have taken  $N_h=80$ (i.e, 160 vectors of thresholds). The same number of points was considered in the mesh $\X_h$, the discretization of the state space. The algorithm was implemented in Python (Jupyter notebook) and the CPU time for the horizon $N=20$ was 240 seconds.  }
}\fi

%\newpage
%\bibliographystyle{plain}
\bibliographystyle{siamplain}
\bibliography{biblio1} 
\end{document}